\documentclass[twoside, journal, onecolumn, double, 11pt]{IEEEtran}

\usepackage{epsfig}
\usepackage{graphicx}
\usepackage{amsmath}
\usepackage{amssymb}
\usepackage{float}
\usepackage{url}
\interdisplaylinepenalty=2500

\setlength{\leftmargini}{0\leftmargini}
\newtheorem{definition}{Definition}
\newtheorem{lemma}{Lemma}

\newtheorem{corollary}{Corollary}
\newtheorem{theorem}{Theorem}

\newtheorem{remark}{Remark}
\newtheorem{example}{Example}

\newcommand{\naturals}{\ensuremath{\mathbb{N}}}
\newcommand{\reals}{\ensuremath{\mathbb{R}}}

\newcommand{\pr}{\ensuremath{\mathbb{P}}}
\newcommand{\expectation}{\ensuremath{\mathbb{E}}}

\begin{document}

\title{ \huge{Tightened Exponential Bounds for Discrete Time,
Conditionally Symmetric Martingales with Bounded Jumps}}

\markboth{To appear in the Statistics and Probability Letters. 
Final version: May 1, 2013.}{I. SASON: Tightened Exponential 
Bounds for Conditionally Symmetric Martingales with Bounded Jumps}

\author{\vspace*{0.2cm} \IEEEauthorblockN{Igal Sason\\
Department of Electrical Engineering\\
Technion - Israel Institute of Technology\\
Haifa 32000, Israel\\
\hspace*{-0.4cm} E-mail: sason@ee.technion.ac.il}}

\maketitle

\begin{abstract}
This letter derives some new exponential bounds for
discrete time, real valued, conditionally symmetric
martingales with bounded jumps. The new bounds are
extended to conditionally symmetric sub/ supermartingales,
and they are compared to some existing bounds.
\end{abstract}

{\em Keywords:} discrete-time (sub/ super) martingales, large deviations, concentration inequalities.

\bigskip
{\em AMS 2000 subject classifications}: 60F10, 60G40, 60G42.

\section{Introduction and Main Results}
\label{section: Introduction}
Classes of exponential bounds for
discrete-time real-valued martingales have been
extensively studied in the literature (see, e.g., Alon and Spencer (2008),
Azuma (1967), Burkholder (1991), Chung and Lu (2006), Dembo and Zeitouni (1997),
Dzhaparide and van Zanten (2001), Freedman (1975), Grama and E. Haeusler (2000),
Hoeffding (1963),
McDiarmid (1989, 1998), de la Pe\~{n}a (1999), de la Pe\~{n}a, Klass and Lai (2004),
Pinelis (1994) and Steiger (1969)).
This letter further assumes conditional symmetry of these martingales,
as is defined in the following.

\vspace*{0.1cm}
\begin{definition}
Let $\{X_k, \mathcal{F}_k\}_{k \in \naturals_0}$, where $ \naturals_0
\triangleq \naturals \cup \{0\}$, be a discrete-time
and real-valued martingale,
and let $\xi_k \triangleq X_k - X_{k-1}$ for every $k \in \naturals$
designate the jumps of the martingale.
Then $\{X_k, \mathcal{F}_k\}_{k \in \naturals_0}$
is called a {\em conditionally symmetric martingale} if, conditioned on
$\mathcal{F}_{k-1}$, the random variable $\xi_k$ is
symmetrically distributed around zero.
\label{definition: conditionally symmetric martingales}
\end{definition}

\vspace*{0.1cm}
Our goal in this letter is to demonstrate how the assumption of
the conditional symmetry improves the existing exponential inequalities for
discrete-time real-valued martingales with bounded increments.
Earlier results, serving as motivation, appear in
Section~4 of Dzhaparide and J. H. van Zanten (2001)
and Section~6 of de la Pe\~{n}a (1999).
The new exponential bounds are also extended to conditionally
symmetric submartingales or supermartingales, where the construction of
these objects is exemplified later in this section.
The relation of some of the exponential bounds derived in
this work with some existing bounds is discussed later in this letter.
Additional results addressing weak-type inequalities, maximal inequalities
and ratio inequalities for conditionally symmetric martingales
were derived in Os\c{e}kowski (2010a,b) and Wang (1991).

\subsection{Main Results}
Our main results for conditionally symmetric martingales
with bounded jumps are introduced in
Theorems~\ref{theorem: a concentration inequality for a symmetric
conditional distribution}, \ref{theorem: second inequality for
conditionally symmetric martingales} and~\ref{theorem: improved
inequality for the exercise of Amir and Ofer's book w.r.t.
conditionally symmetric martingales}.
Theorems~\ref{theorem: first refined concentration inequality}
and~\ref{theorem: from the book of Dembo and Zeitouni}
are existing bounds, for general martingales without the
conditional symmetry assumption, that are introduced in connection
to the new theorems. Corollaries~\ref{corollary: concentration
inequality for conditionally symmetric sub and supermartingales}
and~\ref{corollary: adaptation of Theorems 6 and 7 to supermartingales}
provide an extension of the new results to conditionally symmetric
sub/ supermartingales with bounded jumps.
Our first result is the following theorem.
\vspace*{0.1cm}
\begin{theorem}
Let $\{X_k, \mathcal{F}_k\}_{k \in \naturals_0}$ be a discrete-time
real-valued and conditionally symmetric martingale.
Assume that, for some fixed numbers $d, \sigma > 0$, the
following two requirements are satisfied a.s.:
\begin{equation}
| X_k - X_{k-1} | \leq d, \quad \quad
\text{Var} (X_k | \mathcal{F}_{k-1}) = \expectation \bigl[(X_k
- X_{k-1})^2 \, | \, \mathcal{F}_{k-1} \bigr] \leq \sigma^2
\label{eq: assumptions related to Theorem 2}
\end{equation}
for every $k \in \naturals$. Then, for every
$\alpha \geq 0$ and $n \in \naturals$,
\begin{equation}
\pr\left(\max_{1 \leq k \leq n} |X_k-X_0| \geq \alpha n\right)
\leq 2 \exp\bigl(-n E(\gamma, \delta) \bigr)
\label{eq: a concentration inequality for a symmetric
conditional distribution}
\end{equation}
where
\begin{equation}
\gamma \triangleq \frac{\sigma^2}{d^2}, \quad \delta \triangleq
\frac{\alpha}{d}  \label{eq: notation}
\end{equation}
and for $\gamma \in (0,1]$ and $\delta \in [0,1)$
\begin{eqnarray}
&& E(\gamma, \delta) \triangleq \delta x - \ln \Bigl( 1 + \gamma
\bigl[\cosh(x)-1 \bigr] \Bigr)
\label{eq: exponent for conditional symmetric martingales}\\[0.1cm]
&& x \triangleq \ln \left( \frac{\delta (1-\gamma) +
\sqrt{\delta^2 (1-\gamma)^2 + \gamma^2 (1-\delta^2)}}{\gamma (1-\delta)} \right).
\label{eq: optimal value of x in the exponent for conditional symmetric martingales}
\end{eqnarray}
If $\delta > 1$, then the probability on the left-hand side of
\eqref{eq: a concentration inequality for a symmetric
conditional distribution} is zero (so $E(\gamma, \delta) \triangleq +\infty$),
and $E(\gamma, 1) = \ln\bigl(\frac{2}{\gamma}\bigr).$ Furthermore, the exponent
$E(\gamma, \delta)$
is asymptotically optimal in the sense that there exists a conditionally
symmetric martingale, satisfying the conditions in
\eqref{eq: assumptions related to Theorem 2} a.s., that attains this
exponent in the limit where $n \rightarrow \infty$.
\label{theorem: a concentration inequality for a symmetric conditional distribution}
\end{theorem}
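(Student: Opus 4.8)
The plan is to run the classical Chernoff--Doob argument, using conditional symmetry to get a sharp handle on the conditional moment generating function of the jumps. First I would record the key consequence of symmetry: conditioned on $\mathcal{F}_{k-1}$, the jump $\xi_k = X_k - X_{k-1}$ is symmetric about zero, so for every $t \ge 0$ the odd part of $e^{t\xi_k}$ averages out and $\expectation[e^{t\xi_k}\,|\,\mathcal{F}_{k-1}] = \expectation[\cosh(t\xi_k)\,|\,\mathcal{F}_{k-1}]$. Since $u \mapsto (\cosh u - 1)/u^2 = \sum_{j\ge 1} u^{2j-2}/(2j)!$ is nondecreasing on $[0,\infty)$, the bound $|\xi_k|\le d$ gives $\cosh(t\xi_k) - 1 \le (\xi_k^2/d^2)\,(\cosh(td)-1)$ pointwise; taking conditional expectations and invoking the variance constraint in \eqref{eq: assumptions related to Theorem 2} yields the deterministic bound $\expectation[e^{t\xi_k}\,|\,\mathcal{F}_{k-1}] \le 1 + \gamma\bigl(\cosh(td)-1\bigr)$ with $\gamma$ as in \eqref{eq: notation}. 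Iterating the tower property then gives $\expectation\bigl[e^{t(X_n - X_0)}\bigr] \le \bigl(1+\gamma(\cosh(td)-1)\bigr)^n$.

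Next I would reduce to a one-sided estimate: it suffices to bound $\pr\bigl(\max_{1\le k\le n}(X_k - X_0) \ge \alpha n\bigr)$ and double the result, since $\{-X_k, \mathcal{F}_k\}$ is also a conditionally symmetric martingale satisfying \eqref{eq: assumptions related to Theorem 2}. For fixed $t \ge 0$ the process $e^{t(X_k - X_0)}$ is a nonnegative submartingale (its conditional mean is multiplied by $\expectation[\cosh(t\xi_k)\,|\,\mathcal{F}_{k-1}] \ge 1$), so Doob's maximal inequality combined with the moment bound above gives
\[
\pr\Bigl(\max_{1\le k\le n}(X_k - X_0) \ge \alpha n\Bigr) \le e^{-t\alpha n}\,\expectation\bigl[e^{t(X_n - X_0)}\bigr] \le \exp\!\bigl(-n\,[\,t\alpha - \ln(1+\gamma(\cosh(td)-1))\,]\bigr).
\]
Writing $x = td$ turns the exponent into $\delta x - \ln(1+\gamma(\cosh x - 1))$ with $\delta$ as in \eqref{eq: notation}; this is concave in $x$ (the log-moment term is convex), so its supremum over $x\ge 0$ is attained at the unique stationary point, which solves $\delta\bigl(1+\gamma(\cosh x - 1)\bigr) = \gamma \sinh x$. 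Substituting $u = e^x$ reduces this to the quadratic $\gamma(1-\delta)u^2 - 2\delta(1-\gamma)u - \gamma(1+\delta) = 0$, whose positive root is precisely \eqref{eq: optimal value of x in the exponent for conditional symmetric martingales}, and plugging back produces $E(\gamma,\delta)$ as in \eqref{eq: exponent for conditional symmetric martingales}. I would then check the boundary behaviour: one verifies $x \ge 0$ for $\delta \in [0,1)$; as $\delta \uparrow 1$ the optimal $t$ diverges and the exponent tends to $\ln(2/\gamma)$, matching the stated value of $E(\gamma,1)$; and for $\delta > 1$ the event is empty because $|X_k - X_0| \le kd \le nd < \alpha n$, so the probability vanishes. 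Reinstating the factor $2$ then gives \eqref{eq: a concentration inequality for a symmetric conditional distribution}.

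For the asymptotic optimality claim I would exhibit the i.i.d.\ example with $\pr(\xi_k = d) = \pr(\xi_k = -d) = \gamma/2$ and $\pr(\xi_k = 0) = 1-\gamma$, and $X_k = X_0 + \sum_{j\le k}\xi_j$. It is conditionally symmetric, has $|\xi_k| \le d$, and $\text{Var}(\xi_k) = \gamma d^2 = \sigma^2$, so \eqref{eq: assumptions related to Theorem 2} holds with equality in the variance bound, and $\expectation[e^{t\xi_k}] = 1 + \gamma(\cosh(td)-1)$ exactly. By Cram\'er's theorem, $-\tfrac{1}{n}\ln\pr(X_n - X_0 \ge \alpha n) \to \sup_{t\ge 0}\bigl[\,t\alpha - \ln(1+\gamma(\cosh(td)-1))\,\bigr] = E(\gamma,\delta)$ for $\delta \in (0,1)$; since $\max_{1\le k\le n}(X_k - X_0) \ge X_n - X_0$ and the constant $2$ is irrelevant on the exponential scale, the upper bound \eqref{eq: a concentration inequality for a symmetric conditional distribution} is matched in the limit. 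I expect the main obstacle to be the two technical cores: establishing the sharp conditional moment-generating bound from symmetry together with the variance constraint, and verifying that the explicit root in \eqref{eq: optimal value of x in the exponent for conditional symmetric martingales} is indeed the maximizer while correctly handling the boundary cases $\delta = 1$ and $\delta > 1$; the remaining steps are routine.
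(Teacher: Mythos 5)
Your proposal is correct and follows essentially the same route as the paper: the maximal inequality for the exponential submartingale, the symmetry-plus-boundedness bound $\expectation[e^{t\xi_k}\,|\,\mathcal{F}_{k-1}] \leq 1+\gamma(\cosh(td)-1)$, optimization over $x=td$ yielding the stated root, and the same three-point i.i.d. example with Cram\'{e}r's theorem for asymptotic optimality. Your justification of the key conditional MGF bound via monotonicity of $(\cosh u - 1)/u^2$ is just an equivalent elementary substitute for the paper's convexity lemma (Lemma~\ref{lemma: inequality for symmetric distributions} and Corollary~\ref{corollary: refined Bennett's inequality for symmetric distributions}), so the arguments coincide in substance.
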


\vspace*{0.1cm}
\begin{remark}
From the above conditions, without any loss of generality,
$\sigma^2 \leq d^2$ and therefore $\gamma \in (0,1]$. This implies that
Theorem~\ref{theorem: a concentration inequality for a symmetric conditional distribution}
characterizes the exponent $E(\gamma, \delta)$ for all
values of $\gamma$ and $\delta$.
\end{remark}

\begin{corollary}
Let $\{U_k\}_{k=1}^{\infty} \in L^2(\Omega, \mathcal{F}, \pr)$ be i.i.d. and bounded
random variables with a symmetric distribution around their mean value.
Assume that $|U_1 - \expectation[U_1]| \leq d$ a.s. for some $d>0$, and $\text{Var}(U_1) \leq \gamma d^2$
for some $\gamma \in [0,1]$. Let $\{S_n\}$ designate the sequence of partial sums, i.e.,
$S_n \triangleq \sum_{k=1}^n U_k$ for every $n \in \naturals$. Then, for every $\alpha \geq 0$,
\begin{equation}
\pr\left(\max_{1 \leq k \leq n} \bigl|S_k - k \, \expectation(U_1) \bigr| \geq \alpha n \right)
\leq 2 \exp\bigl(-n E(\gamma, \delta) \bigr), \quad \forall \, n \in \naturals
\end{equation}
where $\delta \triangleq \frac{\alpha}{d}$, and $E(\gamma, \delta)$ is introduced
in \eqref{eq: exponent for conditional symmetric martingales}
and \eqref{eq: optimal value of x in the exponent for conditional symmetric martingales}.
\label{corollary: a tail bound for sums of i.i.d. and bounded RVs witha symmetric distribution}
\end{corollary}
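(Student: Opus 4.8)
The plan is to recognize the centered partial-sum process as a conditionally symmetric martingale with bounded jumps and then invoke Theorem~\ref{theorem: a concentration inequality for a symmetric conditional distribution} verbatim. First I would fix the natural filtration $\mathcal{F}_0 \triangleq \{\emptyset, \Omega\}$ and $\mathcal{F}_k \triangleq \sigma(U_1, \ldots, U_k)$ for $k \in \naturals$, and set $X_0 \triangleq 0$ together with $X_k \triangleq S_k - k\,\expectation(U_1)$ for $k \in \naturals$. Since the $U_k$ are i.i.d.\ and in $L^2 \subseteq L^1$, the process $\{X_k, \mathcal{F}_k\}_{k \in \naturals_0}$ is a martingale: its increments are $\xi_k \triangleq X_k - X_{k-1} = U_k - \expectation(U_1)$, and $\expectation[\xi_k \mid \mathcal{F}_{k-1}] = \expectation[\xi_k] = 0$ because $U_k$ is independent of $\mathcal{F}_{k-1}$.

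Next I would verify the three hypotheses of the theorem. Conditional symmetry: since $U_k \perp \mathcal{F}_{k-1}$, the conditional law of $\xi_k$ given $\mathcal{F}_{k-1}$ equals the unconditional law of $U_k - \expectation(U_1) = U_k - \expectation(U_k)$, which is symmetric around zero by assumption; hence $\{X_k,\mathcal{F}_k\}$ is conditionally symmetric in the sense of Definition~\ref{definition: conditionally symmetric martingales}. Bounded jumps: $|X_k - X_{k-1}| = |U_k - \expectation(U_k)| \leq d$ a.s.\ by hypothesis. Conditional variance: again by independence, $\text{Var}(X_k \mid \mathcal{F}_{k-1}) = \text{Var}(U_k) = \text{Var}(U_1) \leq \gamma d^2$, so with the choice $\sigma^2 \triangleq \gamma d^2$ the second requirement in \eqref{eq: assumptions related to Theorem 2} holds and $\frac{\sigma^2}{d^2} = \gamma \in (0,1]$ in the notation of \eqref{eq: notation}. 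Moreover $\max_{1 \leq k \leq n} |X_k - X_0| = \max_{1 \leq k \leq n} |S_k - k\,\expectation(U_1)|$, so the event in the corollary is exactly the event in \eqref{eq: a concentration inequality for a symmetric conditional distribution}.

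With these identifications, Theorem~\ref{theorem: a concentration inequality for a symmetric conditional distribution} applied with parameters $(d, \sigma)$ and $\delta = \frac{\alpha}{d}$ yields directly
\[
\pr\left(\max_{1 \leq k \leq n} \bigl|S_k - k\,\expectation(U_1)\bigr| \geq \alpha n\right) \leq 2 \exp\bigl(-n E(\gamma, \delta)\bigr),
\]
which is the assertion. The only point requiring a separate word is the degenerate case $\gamma = 0$, i.e.\ $\text{Var}(U_1) = 0$: then $U_1 = \expectation(U_1)$ a.s., so $X_k \equiv 0$ and the probability is $0$ for $\alpha > 0$ and at most $1 \leq 2$ for $\alpha = 0$, so the bound is trivially valid; for $\gamma \in (0,1]$ the argument above applies as stated.

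\textbf{Main obstacle.} There is essentially no analytic difficulty here — the content is entirely in Theorem~\ref{theorem: a concentration inequality for a symmetric conditional distribution}. The only care needed is bookkeeping: checking that independence of the i.i.d.\ summands upgrades the unconditional symmetry of $U_1 - \expectation(U_1)$ to the \emph{conditional} symmetry demanded by Definition~\ref{definition: conditionally symmetric martingales}, and matching the variance bound $\text{Var}(U_1) \leq \gamma d^2$ to the parameter $\gamma = \sigma^2/d^2$ so that the same exponent $E(\gamma,\delta)$ appears.
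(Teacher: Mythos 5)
Your proof is correct and follows exactly the route the paper intends: the corollary is an immediate instance of Theorem~\ref{theorem: a concentration inequality for a symmetric conditional distribution} applied to the centered partial-sum martingale, which is conditionally symmetric by the same independence argument used in Example~\ref{example1: constructing a conditionally symmetric martingale from iid RVs}. Your extra remark handling the degenerate case $\gamma=0$ (where $E(\gamma,\delta)$ in \eqref{eq: optimal value of x in the exponent for conditional symmetric martingales} is not defined) is a careful touch the paper glosses over.
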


\vspace*{0.1cm}
Theorem~\ref{theorem: a concentration inequality for a symmetric conditional distribution}
should be compared to the statement in
Theorem~6.1 of McDiarmid (1989)
(see also Corollary~2.4.7 in Dembo and Zeitouni (1997)),
which does not require the conditional symmetry property.
It gives the following result.
\vspace*{0.1cm}
\begin{theorem}
Let $\{X_k, \mathcal{F}_k\}_{k \in \naturals_0}$ be a
discrete-time real-valued martingale with bounded jumps. Assume that
the two conditions in \eqref{eq: assumptions related to Theorem 2}
are satisfied a.s. for every $k \in \naturals$. Then, for every
$\alpha \geq 0$ and $n \in \naturals$,
\begin{equation}
\pr\left(\max_{1 \leq k \leq n} |X_k-X_0| \geq \alpha n\right) \leq 2 \exp\left(-n
\, D\biggl(\frac{\delta+\gamma}{1+\gamma} \Big|\Big|
\frac{\gamma}{1+\gamma}\biggr) \right) \label{eq: first refined
concentration inequality}
\end{equation}
where $\gamma$ and $\delta$ are introduced in \eqref{eq: notation},
and
\begin{equation}
D(p || q) \triangleq p \ln\Bigl(\frac{p}{q}\Bigr) + (1-p)
\ln\Bigl(\frac{1-p}{1-q}\Bigr), \quad \forall \, p, q \in [0,1]
\label{eq: divergence}
\end{equation}
is the divergence (also known as relative entropy or
Kullback-Leibler distance) to the natural base between
the two probability distributions $(p,1-p)$ and $(q,1-q)$ (with
the convention that $0 \log 0$ is zero in the case where
$p$ or $q$ take on the values zero or one).
If $\delta>1$, then the probability on the left-hand
side of \eqref{eq: first refined concentration inequality}
is zero. Furthermore, the exponent on the right-hand side of
\eqref{eq: first refined concentration inequality} is
asymptotically optimal under the assumptions of this theorem.
\label{theorem: first refined concentration inequality}
\end{theorem}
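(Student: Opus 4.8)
The plan is to prove \eqref{eq: first refined concentration inequality} by the exponential (Chernoff--Cram\'er) method applied directly to the martingale, the key inputs being a sharp bound on the conditional moment generating function of the jumps together with Doob's maximal inequality; the stated exponent will then emerge as a Legendre transform that coincides with the binary divergence in \eqref{eq: divergence}. Note that conditional symmetry is \emph{not} assumed here, so only the two hypotheses in \eqref{eq: assumptions related to Theorem 2} may be used. Throughout I would first normalize $d=1$ (rescale $X_k \mapsto X_k/d$), so that the jumps $\xi_k \triangleq X_k - X_{k-1}$ satisfy $|\xi_k|\le 1$ a.s., $\expectation[\xi_k \,|\, \mathcal{F}_{k-1}]=0$, and $\expectation[\xi_k^2 \,|\, \mathcal{F}_{k-1}]\le \gamma$ a.s., and the deviation threshold becomes $\delta n$ with $\delta=\alpha/d$ as in \eqref{eq: notation}.

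The technical heart is the conditional moment-generating-function estimate: for every $t\ge 0$,
\begin{equation}
\expectation\bigl[e^{t\xi_k}\,\big|\,\mathcal{F}_{k-1}\bigr]\le g(t)\triangleq \frac{\gamma\,e^{t}+e^{-\gamma t}}{1+\gamma}\quad\text{a.s.}
\label{eq: mgf bound plan}
\end{equation}
Here $g(t)$ is precisely the moment generating function of the two-point variable $W$ taking the value $1$ with probability $\frac{\gamma}{1+\gamma}$ and the value $-\gamma$ with probability $\frac{1}{1+\gamma}$; one checks that $W$ has mean $0$ and variance exactly $\gamma$ and lies in $[-1,1]$ (using $\gamma\le 1$), so $W$ is the extremal worst-case jump. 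To establish \eqref{eq: mgf bound plan} I would exhibit a quadratic majorant: determine $a,b,c$ with $c\ge 0$ for which $a+bx+cx^2\ge e^{tx}$ holds for all $x\le 1$, with tangency at $x=-\gamma$ and equality at $x=1$. Taking conditional expectations and invoking $\expectation[\xi_k\,|\,\mathcal{F}_{k-1}]=0$, $\expectation[\xi_k^2\,|\,\mathcal{F}_{k-1}]\le\gamma$, and $c\ge 0$ then yields $\expectation[e^{t\xi_k}\,|\,\mathcal{F}_{k-1}]\le a+c\gamma=g(t)$. \emph{I expect this to be the main obstacle.} Writing $\psi(x)\triangleq a+bx+cx^2-e^{tx}$, the construction forces a double root at $x=-\gamma$ and a simple root at $x=1$; since $\psi''(x)=2c-t^2 e^{tx}$ changes sign exactly once, $\psi$ is convex then concave, and a careful sign analysis must be pushed through to conclude $\psi\ge 0$ on all of $(-\infty,1]$. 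This is exactly the step where both bounded-jump and bounded-variance hypotheses enter, and it requires the most care.

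Granting \eqref{eq: mgf bound plan}, I would pass to the maximal inequality. For fixed $t\ge 0$ the process $N_k\triangleq e^{t(X_k-X_0)}/g(t)^k$ is a nonnegative supermartingale with $N_0=1$, since $\expectation[N_k\,|\,\mathcal{F}_{k-1}]=N_{k-1}\,\expectation[e^{t\xi_k}\,|\,\mathcal{F}_{k-1}]/g(t)\le N_{k-1}$. Because $g$ is the moment generating function of a zero-mean variable, Jensen gives $g(t)\ge 1$, so on $\{X_k-X_0\ge \delta n\}$ with $k\le n$ one has $N_k\ge e^{t\delta n}/g(t)^n$. Doob's (Ville's) maximal inequality for nonnegative supermartingales then yields
\begin{equation}
\pr\Bigl(\max_{1\le k\le n}(X_k-X_0)\ge \delta n\Bigr)\le g(t)^n\,e^{-t\delta n}=\exp\bigl(-n[\delta t-\ln g(t)]\bigr).
\label{eq: one sided plan}
\end{equation}
Applying the identical argument to $-X_k$, which is again a martingale satisfying \eqref{eq: assumptions related to Theorem 2} with the same $g$, bounds the lower tail by the same quantity, and a union bound over the two one-sided events supplies the factor $2$ in \eqref{eq: first refined concentration inequality}.

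It remains to optimize \eqref{eq: one sided plan} over $t\ge 0$ and identify the result with the divergence. Factoring $e^{-\gamma t}$ gives $\ln g(t)=-\gamma t+\Lambda(s)$ with $s\triangleq(1+\gamma)t$ and $\Lambda(s)\triangleq \ln\frac{1+\gamma e^{s}}{1+\gamma}$, which is exactly the cumulant generating function of a Bernoulli variable with success probability $q\triangleq\frac{\gamma}{1+\gamma}$. Hence $\delta t-\ln g(t)=(\delta+\gamma)t-\Lambda(s)=ps-\Lambda(s)$ with $p\triangleq\frac{\delta+\gamma}{1+\gamma}$, and since $p\ge q$ for $\delta\ge 0$ the maximizer satisfies $s^\ast\ge 0$; therefore $\sup_{t\ge 0}[\delta t-\ln g(t)]=\sup_{s\in\reals}[ps-\Lambda(s)]=\Lambda^\ast(p)=D\bigl(\tfrac{\delta+\gamma}{1+\gamma}\,\big\|\,\tfrac{\gamma}{1+\gamma}\bigr)$, the Legendre transform of a Bernoulli log-moment generating function being the binary divergence \eqref{eq: divergence}. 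For the degenerate case, if $\delta>1$ then $|X_k-X_0|\le\sum_{j\le k}|\xi_j|\le n<\delta n$, so the probability vanishes. Finally, for asymptotic optimality I would take i.i.d. increments distributed as $dW$, i.e. $\xi_k\in\{d,-\gamma d\}$ with $\pr(\xi_k=d)=\frac{\gamma}{1+\gamma}$: this is a martingale meeting \eqref{eq: assumptions related to Theorem 2} with variance exactly $\sigma^2$, and since $\max_{k\le n}|X_k-X_0|\ge|X_n-X_0|$, Cram\'er's theorem gives $-\tfrac1n\ln\pr(X_n-X_0\ge\delta d\,n)\to D\bigl(\tfrac{\delta+\gamma}{1+\gamma}\,\big\|\,\tfrac{\gamma}{1+\gamma}\bigr)$, matching the upper bound and establishing that the exponent cannot be improved.
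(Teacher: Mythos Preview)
Your proposal is correct and follows the same skeleton the paper adopts: the exponential/Chernoff method together with a maximal inequality, the key Bennett-type bound on the conditional moment generating function of the jumps, optimization over $t\ge 0$, and Cram\'er's theorem with the two-point i.i.d.\ example $\pr(\xi=d)=\gamma/(1+\gamma)$, $\pr(\xi=-\gamma d)=1/(1+\gamma)$ for asymptotic optimality. The paper itself does not give a self-contained proof of the inequality \eqref{eq: first refined concentration inequality}; it simply cites McDiarmid (1989, Theorem~6.1) and Dembo and Zeitouni (1997, Corollary~2.4.7) and names the MGF step ``Bennett's inequality,'' while your quadratic-majorant construction is one concrete way to establish that very bound \eqref{eq: mgf bound plan}. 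For the optimality part the paper does exactly what you do (see the end of the proof of Theorem~\ref{theorem: a concentration inequality for a symmetric conditional distribution}).

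Two minor remarks. First, since the hypothesis is the two-sided bound $|\xi_k|\le 1$, you only need the majorant $a+bx+cx^2\ge e^{tx}$ on $[-1,1]$ rather than on all of $(-\infty,1]$; either way, your convex--concave sign analysis of $\psi$ (double root at $-\gamma$, simple root at $1$, $\psi''$ decreasing) closes without difficulty. Second, where you invoke Ville's inequality for the nonnegative supermartingale $N_k=e^{t(X_k-X_0)}/g(t)^k$, the paper instead applies Doob's maximal inequality to the submartingale $e^{t(X_k-X_0)}$; the two routes are equivalent here since $g(t)\ge 1$.
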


\begin{remark}
The two exponents in
Theorems~\ref{theorem: a concentration inequality for a symmetric conditional distribution}
and~\ref{theorem: first refined concentration inequality}
are both discontinuous at $\delta=1$. This is consistent with the assumption of the bounded jumps
that implies that $\pr(|X_n - X_0| \geq n d \delta)$ is equal to zero if $\delta > 1$.

If $\delta \rightarrow 1^-$ then, from
\eqref{eq: exponent for conditional symmetric martingales} and
\eqref{eq: optimal value of x in the exponent for conditional symmetric martingales},
for every $\gamma \in (0,1]$,
\begin{eqnarray}
\lim_{\delta \rightarrow 1^-} E(\gamma, \delta)
= \lim_{x \rightarrow \infty} \left[x - \ln\bigl(1+\gamma (\cosh(x)-1)\bigr)\right]
= \ln\left(\frac{2}{\gamma}\right).
\label{eq: left limit of the exponent in Theorem 1 at delta=1}
\end{eqnarray}
On the other hand, the right limit at $\delta=1$ is infinity since
$E(\gamma, \delta) = +\infty$ for every $\delta>1$.
The same discontinuity also exists for the
exponent in Theorem~\ref{theorem: first refined concentration inequality}
where the right limit at $\delta=1$ is infinity, and the left limit is equal to
\begin{equation}
\lim_{\delta \rightarrow 1^-} D\biggl(\frac{\delta+\gamma}{1+\gamma} \Big|\Big|
\frac{\gamma}{1+\gamma}\biggr) = \ln\left(1+\frac{1}{\gamma}\right)
\label{eq: left limit of the exponent in Theorem 2 at delta=1}
\end{equation}
where the last equality follows from \eqref{eq: divergence}.
A comparison of the limits in \eqref{eq: left limit of the exponent in Theorem 1 at delta=1}
and \eqref{eq: left limit of the exponent in Theorem 2 at delta=1} is consistent with
the improvement that is obtained in
Theorem~\ref{theorem: a concentration inequality for a symmetric conditional distribution}
as compared to Theorem~\ref{theorem: first refined concentration inequality} due to the
additional assumption of the conditional symmetry that is relevant if $\gamma \in (0,1)$.
It can be verified that the two exponents coincide if $\gamma=1$
(which is equivalent to removing the constraint on the conditional variance), and their common value is equal to
\begin{equation}
f(\delta) = \left\{
\begin{array}{ll}
\ln(2) \Bigl[1 - h_2\left(\frac{1-\delta}{2} \right) \Bigr], \quad &0
\leq \delta \leq 1
\\[0.1cm]
+\infty, \quad & \delta > 1
\end{array}
\right. \label{eq: f}
\end{equation}
where $h_2(x) \triangleq -x \log_2(x) - (1-x) \log_2(1-x)$ for $0
\leq x \leq 1$ denotes the binary entropy function to the base~2 (with
the convention that it is defined to be zero at $x=0$ or $x=1$).
\end{remark}

\vspace*{0.1cm}
Theorem~\ref{theorem: a concentration inequality for
a symmetric conditional distribution} provides an
improvement over the bound in
Theorem~\ref{theorem: first refined concentration inequality}
for conditionally symmetric martingales with
bounded jumps. The bounds in
Theorems~\ref{theorem: a concentration inequality for
a symmetric conditional distribution}
and~\ref{theorem: first refined concentration inequality}
depend on the conditional variance of the martingale, but
they do not take into consideration conditional moments
of higher orders. The following bound generalizes the
bound in Theorem~\ref{theorem: a concentration inequality for a symmetric conditional distribution},
but it does not admit in general a closed-form expression.
\vspace*{0.1cm}
\begin{theorem}
Let $\{X_k, \mathcal{F}_k\}_{k \in \naturals_0}$ be a
discrete-time and real-valued conditionally symmetric martingale. Let
$m \in \naturals$ be an even number, and
assume that the following conditions hold a.s. for every
$k \in \naturals$
\begin{eqnarray*}
| X_k - X_{k-1} | \leq d, \nonumber \quad \quad
\expectation \bigl[(X_k - X_{k-1})^l \, | \,
\mathcal{F}_{k-1} \bigr] \leq \mu_l, \; \;
\forall \, l \in \{2, 4, \ldots, m\} \label{eq: gamma_l}
\end{eqnarray*}
for some $d>0$ and non-negative numbers
$\{\mu_2, \mu_4, \ldots, \mu_m\}$. Then,
for every $\alpha \geq 0$ and $n \in \naturals$,
\begin{eqnarray}
\pr\left(\max_{1 \leq k \leq n} |X_k-X_0| \geq \alpha n\right)
\leq 2 \left\{\min_{x \geq 0}
\, e^{-\delta x} \left[1 + \sum_{l=1}^{\frac{m}{2}-1}
\frac{(\gamma_{2l}-\gamma_m) \, x^{2l}}{(2l)!} +
\gamma_m \bigl(\cosh(x)-1\bigr) \right] \right\}^n
\label{eq: inequality in the 2nd approach}
\end{eqnarray}
where
\begin{equation}
\delta \triangleq \frac{\alpha}{d}, \quad
\gamma_{2l} \triangleq \frac{\mu_{2l}}{d^{2l}}, \; \;
\forall \, l \in \Bigl\{1, \ldots, \frac{m}{2}\Bigr\}.
\label{eq: gamma and delta for the second theorem
on conditionally symmetric martingales}
\end{equation}
\label{theorem: second inequality for conditionally
symmetric martingales}
\end{theorem}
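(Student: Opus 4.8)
\emph{Proof proposal.} The plan is to run the classical Chernoff-plus-Doob scheme, where conditional symmetry is used to replace an exponential moment by a hyperbolic-cosine moment and the bounded-jumps assumption is used to control the moments of order larger than $m$.

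First I would reduce the two-sided event to a one-sided one. Since $\{X_k - X_0, \mathcal{F}_k\}$ is a conditionally symmetric martingale, so is $\{-(X_k-X_0), \mathcal{F}_k\}$, and a union bound gives
\[
\pr\left(\max_{1 \leq k \leq n} |X_k-X_0| \geq \alpha n\right)
\leq 2 \, \pr\left(\max_{1 \leq k \leq n} (X_k-X_0) \geq \alpha n\right);
\]
it thus suffices to bound the one-sided probability by $\bigl\{\min_{x\geq0} e^{-\delta x}[\cdots]\bigr\}^{n}$. Fix $\lambda > 0$. The process $Y_k \triangleq e^{\lambda(X_k-X_0)}$ is a non-negative submartingale (conditional Jensen, $y\mapsto e^{\lambda y}$ convex) and is integrable since $|X_k-X_0|\leq kd$ a.s.; because $Y_0 = 1 < e^{\lambda\alpha n}$ when $\alpha n>0$, Doob's maximal inequality yields
\[
\pr\left(\max_{1 \leq k \leq n} (X_k-X_0) \geq \alpha n\right)
\leq e^{-\lambda \alpha n}\, \expectation\bigl[e^{\lambda(X_n-X_0)}\bigr].
\]

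Next I would peel off the jumps $\xi_k \triangleq X_k - X_{k-1}$ one at a time: conditioning on $\mathcal{F}_{n-1}$,
\[
\expectation\bigl[e^{\lambda(X_n-X_0)}\bigr]
= \expectation\Bigl[e^{\lambda(X_{n-1}-X_0)}\,\expectation\bigl[e^{\lambda\xi_n}\,\big|\,\mathcal{F}_{n-1}\bigr]\Bigr],
\]
so everything reduces to a deterministic upper bound on $\expectation[e^{\lambda\xi_k}\,|\,\mathcal{F}_{k-1}]$. Here conditional symmetry gives $\expectation[e^{\lambda\xi_k}\,|\,\mathcal{F}_{k-1}] = \expectation[\cosh(\lambda\xi_k)\,|\,\mathcal{F}_{k-1}]$; expanding $\cosh$ and interchanging the sum with the conditional expectation (legitimate by monotone convergence, all summands being non-negative),
\[
\expectation\bigl[e^{\lambda\xi_k}\,\big|\,\mathcal{F}_{k-1}\bigr]
= 1 + \sum_{j\geq1}\frac{\lambda^{2j}}{(2j)!}\,\expectation\bigl[\xi_k^{2j}\,\big|\,\mathcal{F}_{k-1}\bigr].
\]
For $1\leq j\leq \tfrac{m}{2}$ the hypothesis gives $\expectation[\xi_k^{2j}\,|\,\mathcal{F}_{k-1}]\leq \mu_{2j}=\gamma_{2j}d^{2j}$, while for $j>\tfrac{m}{2}$ the bound $|\xi_k|\leq d$ gives $\xi_k^{2j}=\xi_k^m\,(\xi_k^2)^{j-m/2}\leq d^{2j-m}\xi_k^m$, hence $\expectation[\xi_k^{2j}\,|\,\mathcal{F}_{k-1}]\leq d^{2j-m}\mu_m=\gamma_m d^{2j}$. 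Writing $x\triangleq\lambda d$ and using $\cosh(x)-1=\sum_{j\geq1}\frac{x^{2j}}{(2j)!}$ to merge the two ranges, the two ranges combine to
\[
\expectation\bigl[e^{\lambda\xi_k}\,\big|\,\mathcal{F}_{k-1}\bigr]
\leq 1 + \sum_{l=1}^{m/2}\frac{(\gamma_{2l}-\gamma_m)\,x^{2l}}{(2l)!} + \gamma_m\bigl(\cosh(x)-1\bigr)
=: g(x),
\]
and the $l=\tfrac m2$ term is $0$, which is why the sum in \eqref{eq: inequality in the 2nd approach} runs only to $\tfrac m2-1$. Since $g(x)$ is deterministic, iterating the peeling identity gives $\expectation[e^{\lambda(X_n-X_0)}]\leq g(x)^n$, so $\pr(\max_{1\leq k\leq n}(X_k-X_0)\geq\alpha n)\leq (e^{-\lambda\alpha}g(\lambda d))^n=(e^{-\delta x}g(x))^n$ because $\lambda\alpha=\delta x$. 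Taking the infimum over $x\geq0$ and inserting the factor $2$ finishes the proof; the case $\alpha n=0$ is trivial since, written with non-negative coefficients, $g(x) = 1+\sum_{l=1}^{m/2}\frac{\gamma_{2l}x^{2l}}{(2l)!}+\gamma_m\sum_{j>m/2}\frac{x^{2j}}{(2j)!}\geq 1$ with equality at $x=0$.

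The computations are elementary; the only delicate point is obtaining a bound on $\expectation[e^{\lambda\xi_k}\,|\,\mathcal{F}_{k-1}]$ that is simultaneously uniform in $\omega$ and handles \emph{all} even conditional moments, since only the first $\tfrac m2$ of them are assumed bounded. This is exactly where $|\xi_k|\leq d$ is essential: it collapses the tail of the $\cosh$ series into the single coefficient $\gamma_m$, turning the infinite series into the finite expression in \eqref{eq: inequality in the 2nd approach}. Finally, specializing to $m=2$ (empty sum, $\gamma_m=\gamma_2=\gamma$) gives $g(x)=1+\gamma(\cosh(x)-1)$, and minimizing $e^{-\delta x}g(x)$ recovers $E(\gamma,\delta)$ and hence Theorem~\ref{theorem: a concentration inequality for a symmetric conditional distribution}, as claimed.
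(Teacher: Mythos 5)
Your argument follows essentially the same route as the paper's proof: Doob's maximal inequality applied to the submartingale $e^{\lambda(X_k-X_0)}$, the tower property to peel off one jump at a time, conditional symmetry to kill the odd conditional moments, and the bound $|\xi_k|\le d$ to dominate every even conditional moment of order above $m$ by $\gamma_m d^{2j}$, which after the change of variable $x=\lambda d$ gives exactly the expression in \eqref{eq: inequality in the 2nd approach}; all of these steps are correct. The one blemish is your displayed reduction $\pr\bigl(\max_{1\le k\le n}|X_k-X_0|\ge\alpha n\bigr)\le 2\,\pr\bigl(\max_{1\le k\le n}(X_k-X_0)\ge\alpha n\bigr)$: conditional symmetry does not make the law of the path invariant under a global sign flip, so this inequality between the two probabilities can fail (e.g., let $\xi_1=\pm1$ equiprobably and, given $\xi_1$, let $\xi_2=\pm2$ if $\xi_1=1$ but $\xi_2=\pm3$ if $\xi_1=-1$; then $\pr(\max_k S_k\ge 4)=0$ while $\pr(\max_k(-S_k)\ge 4)=\tfrac14$). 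What is true, and all you need, is the union bound $\pr(\max_k|X_k-X_0|\ge\alpha n)\le \pr(\max_k(X_k-X_0)\ge\alpha n)+\pr(\max_k(X_0-X_k)\ge\alpha n)$ combined with the observation you already make, namely that $\{-(X_k-X_0),\mathcal{F}_k\}$ is a conditionally symmetric martingale satisfying the identical hypotheses, so the same Chernoff expression bounds the second term as well; this yields the factor $2$, exactly as in the paper.
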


%\vspace*{0.1cm}
We consider in the following a different type of exponential inequalities for
conditionally symmetric martingales with bounded jumps.
\vspace*{0.1cm}
\begin{theorem}
Let $\{X_n, \mathcal{F}_n\}_{n \in \naturals_0}$ be a
discrete-time real-valued and conditionally symmetric
martingale. Assume that
there exists a fixed number $d>0$ such that
$\xi_k \triangleq X_k - X_{k-1} \leq d$ a.s. for every
$k \in \naturals$. Let
\begin{equation}
Q_n \triangleq \sum_{k=1}^n \expectation[\xi_k^2 \, | \,
\mathcal{F}_{k-1}] \label{eq: Q}
\end{equation}
with $Q_0 \triangleq 0$, be the predictable quadratic variation of the martingale
up to time $n$. Then, for every $z,r>0$,
\begin{equation}
\pr\left( \max_{1 \leq k \leq n} (X_k - X_0)
\geq z, \, Q_n \leq r \; \; \text{for some $n \in \naturals$}\right)
\leq \exp \left(-\frac{z^2}{2r} \cdot C\left(\frac{zd}{r} \right) \right)
\label{eq: improved inequality for the exercise of Amir and
Ofer's book w.r.t. conditionally symmetric martingales}
\end{equation}
where
\begin{equation}
C(u) \triangleq \frac{2[u \sinh^{-1}(u) - \sqrt{1+u^2}+1]}{u^2} \, , \quad \forall
\, u>0. \label{eq: C}
\end{equation}
\label{theorem: improved inequality for the exercise of Amir and Ofer's
book w.r.t. conditionally symmetric martingales}
\end{theorem}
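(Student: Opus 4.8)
The plan is to run the classical exponential--supermartingale (Chernoff) argument, using the conditional symmetry only to replace $e^{\lambda\xi_k}$ by $\cosh(\lambda\xi_k)$ inside conditional expectations, and to convert the ``for some $n$'' event into a statement about a single hitting time. Fix $\lambda>0$. Since $\xi_k\leq d$ a.s.\ and $\xi_k$ is, conditionally on $\mathcal{F}_{k-1}$, symmetric around zero, we also have $|\xi_k|\leq d$ a.s.\ and $\expectation[e^{\lambda\xi_k}\mid\mathcal{F}_{k-1}]=\expectation[\cosh(\lambda\xi_k)\mid\mathcal{F}_{k-1}]$. Expanding $\cosh(\lambda\xi_k)=1+\sum_{j\geq 1}\frac{(\lambda\xi_k)^{2j}}{(2j)!}$ and bounding $\xi_k^{2j}\leq d^{2j-2}\,\xi_k^2$ term by term gives, with $\psi(\lambda)\triangleq\frac{\cosh(\lambda d)-1}{d^2}$,
\begin{equation}
\expectation\bigl[e^{\lambda\xi_k}\mid\mathcal{F}_{k-1}\bigr]\leq 1+\psi(\lambda)\,\expectation\bigl[\xi_k^2\mid\mathcal{F}_{k-1}\bigr]\leq\exp\Bigl(\psi(\lambda)\,\expectation\bigl[\xi_k^2\mid\mathcal{F}_{k-1}\bigr]\Bigr),
\end{equation}
the last step being $1+t\leq e^{t}$.

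Next define $M_n\triangleq\exp\bigl(\lambda(X_n-X_0)-\psi(\lambda)Q_n\bigr)$, so that $M_0=1$, $M_n>0$, and each $M_n$ is bounded because $|X_n-X_0|\leq nd$. Since $Q_n-Q_{n-1}=\expectation[\xi_n^2\mid\mathcal{F}_{n-1}]$ is $\mathcal{F}_{n-1}$-measurable, the previous display yields $\expectation[M_n\mid\mathcal{F}_{n-1}]=M_{n-1}\,e^{-\psi(\lambda)(Q_n-Q_{n-1})}\,\expectation[e^{\lambda\xi_n}\mid\mathcal{F}_{n-1}]\leq M_{n-1}$, so $\{M_n,\mathcal{F}_n\}$ is a non-negative supermartingale. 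Now put $\tau\triangleq\inf\{k\geq 1:X_k-X_0\geq z\}$. Because $\{Q_n\}$ is non-decreasing, the event on the left-hand side of \eqref{eq: improved inequality for the exercise of Amir and Ofer's book w.r.t. conditionally symmetric martingales}, namely $\bigcup_{n\geq 1}\{\max_{1\leq k\leq n}(X_k-X_0)\geq z,\ Q_n\leq r\}$, coincides with $\{\tau<\infty,\ Q_\tau\leq r\}$: if $\max_{1\leq k\leq n}(X_k-X_0)\geq z$ then $\tau\leq n$ and hence $Q_\tau\leq Q_n\leq r$, while the choice $n=\tau$ gives the reverse inclusion.

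Finally, the stopped process $\{M_{\tau\wedge n}\}$ is again a supermartingale, so $\expectation[M_{\tau\wedge n}]\leq M_0=1$. On $\{\tau\leq n,\ Q_\tau\leq r\}$ we have $M_{\tau\wedge n}=M_\tau\geq\exp\bigl(\lambda z-\psi(\lambda)r\bigr)$ (using $\lambda>0$, $\psi(\lambda)>0$, $X_\tau-X_0\geq z$, $Q_\tau\leq r$), and discarding the non-negative contribution of the rest of $\Omega$ gives $\pr(\tau\leq n,\ Q_\tau\leq r)\leq\exp\bigl(-\lambda z+\psi(\lambda)r\bigr)$; letting $n\to\infty$ yields $\pr(\tau<\infty,\ Q_\tau\leq r)\leq\exp\bigl(-\lambda z+\psi(\lambda)r\bigr)$ for every $\lambda>0$. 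It remains to minimize $g(\lambda)\triangleq-\lambda z+\psi(\lambda)r$; this function is strictly convex with $g'(\lambda)=-z+\frac{r}{d}\sinh(\lambda d)$, so the minimizer is $\lambda^{\ast}=\frac{1}{d}\sinh^{-1}(u)$ with $u\triangleq\frac{zd}{r}$, and substituting $\cosh(\lambda^{\ast}d)=\sqrt{1+u^{2}}$ together with $z=\frac{ur}{d}$ collapses $g(\lambda^{\ast})$ to $-\frac{r}{d^{2}}\bigl(u\sinh^{-1}(u)-\sqrt{1+u^{2}}+1\bigr)$, which by \eqref{eq: C} equals $-\frac{z^{2}}{2r}\,C\bigl(\frac{zd}{r}\bigr)$. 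This is the asserted inequality.

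The only genuinely delicate point is the reduction in the penultimate paragraph: it is the monotonicity of the predictable quadratic variation that lets the joint ``$\max\geq z$ and $Q\leq r$ for some $n$'' event be evaluated at the single time $\tau$, and once this is done the supremum over $n$ is free, since $\expectation[M_{\tau\wedge n}]\leq 1$ uniformly in $n$ and one simply passes to the limit. Everything else is the routine Chernoff optimization, with the conditional symmetry entering only in the first display through the replacement of $e^{\lambda\xi_k}$ by $\cosh(\lambda\xi_k)$.
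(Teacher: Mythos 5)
Your proof is correct and follows essentially the same route as the paper: the exponential supermartingale $\exp\bigl(\lambda(X_n-X_0)-\theta Q_n\bigr)$ with the symmetry-refined bound $\expectation[e^{\lambda\xi}\mid\mathcal{F}_{k-1}]\le 1+\frac{\cosh(\lambda d)-1}{d^2}\,\expectation[\xi^2\mid\mathcal{F}_{k-1}]$, optional stopping, and the same optimization over $\lambda$ yielding $C(zd/r)$. The only differences are cosmetic: you fix $\theta=\psi(\lambda)$ at the outset via $1+t\le e^t$ and use a hitting time with the stopped supermartingale, where the paper keeps general $\theta\ge\cosh(\lambda)-1$, normalizes $d=1$, and applies Doob's sampling on a finite horizon before letting $M\to\infty$.
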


Theorem~\ref{theorem: improved inequality for the exercise of Amir and Ofer's
book w.r.t. conditionally symmetric martingales} should be compared to
Theorem~1.6 in Freedman (1975)
(see also Exercise~2.4.21(b) in Dembo and Zeitouni (1997)) that was stated
without the requirement for the conditional symmetry of the martingale.
It provides the following result:
\vspace*{0.1cm}
\begin{theorem}
Let $\{X_n, \mathcal{F}_n\}_{n \in \naturals_0}$ be a
discrete-time real-valued martingale. Assume that
there exists a fixed number $d>0$ such that
$\xi_k \triangleq X_k - X_{k-1} \leq d$ a.s. for every
$k \in \naturals$. Then, for every $z,r>0$,
\begin{eqnarray}
\pr\left( \max_{1 \leq k \leq n} (X_k - X_0)
\geq z, \, Q_n \leq r \; \; \text{for some $n \in \naturals$}\right)
\leq \exp \left(-\frac{z^2}{2r} \cdot
B\left(\frac{zd}{r} \right) \right) \label{eq: inequality in
exercise of Amir and Ofer's book}
\end{eqnarray}
where
\begin{equation}
B(u) \triangleq \frac{2[(1+u) \ln(1+u) - u]}{u^2} \, , \quad \forall
\, u>0. \label{eq: B}
\end{equation}
\label{theorem: from the book of Dembo and Zeitouni}
\end{theorem}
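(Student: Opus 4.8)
\emph{Proof outline.} The plan is to use the classical exponential‑supermartingale (Freedman‑type) argument: deduce the stated bound from a tail estimate for a suitably exponentially tilted process, and then optimize over the tilting parameter. The first ingredient is an elementary one‑step estimate. Since the function $t\mapsto(e^t-1-t)/t^2$, extended by the value $\tfrac12$ at $t=0$, is nondecreasing on $\reals$ (its numerator $(t-2)e^t+t+2$ and $t^3$ have the same sign), for every $\lambda>0$ and every random variable $\xi\le d$ a.s. one has the pointwise bound $e^{\lambda\xi}\le 1+\lambda\xi+c_\lambda\,\xi^2$, where $c_\lambda\triangleq\frac{e^{\lambda d}-1-\lambda d}{d^2}\ge 0$. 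Taking $\xi=\xi_k$, conditioning on $\mathcal{F}_{k-1}$, and using the martingale property $\expectation[\xi_k\mid\mathcal{F}_{k-1}]=0$ gives $\expectation[e^{\lambda\xi_k}\mid\mathcal{F}_{k-1}]\le 1+c_\lambda\,\expectation[\xi_k^2\mid\mathcal{F}_{k-1}]\le\exp\bigl(c_\lambda\,\expectation[\xi_k^2\mid\mathcal{F}_{k-1}]\bigr)$.

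Next I would fix $\lambda>0$ and introduce the process $U_n\triangleq\exp\bigl(\lambda(X_n-X_0)-c_\lambda Q_n\bigr)$ with $U_0=1$. Since $Q_n$ is $\mathcal{F}_{n-1}$‑measurable (it is the predictable quadratic variation), the one‑step estimate shows $\expectation[U_n\mid\mathcal{F}_{n-1}]\le U_{n-1}$, so $\{U_n,\mathcal{F}_n\}$ is a nonnegative supermartingale with $\expectation[U_n]\le 1$. Because $n\mapsto Q_n$ is nondecreasing, the event on the left‑hand side of \eqref{eq: inequality in exercise of Amir and Ofer's book} coincides with $\{\exists\,k\ge 1:\ X_k-X_0\ge z,\ Q_k\le r\}=\{\tau<\infty\}$, where $\tau\triangleq\inf\{n\ge 1:\ X_n-X_0\ge z,\ Q_n\le r\}$ is a stopping time (both $X_n-X_0$ and $Q_n$ are $\mathcal{F}_n$‑measurable). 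On $\{\tau<\infty\}$ one has $X_\tau-X_0\ge z$, $Q_\tau\le r$ and $c_\lambda\ge 0$, hence $U_\tau\ge e^{\lambda z-c_\lambda r}$. Applying Fatou's lemma to the stopped nonnegative supermartingale $U_{n\wedge\tau}$ gives $e^{\lambda z-c_\lambda r}\,\pr(\tau<\infty)\le\expectation[U_\tau\mathbf{1}_{\{\tau<\infty\}}]\le\liminf_n\expectation[U_{n\wedge\tau}]\le 1$, i.e. $\pr(\tau<\infty)\le e^{-\lambda z+c_\lambda r}$ for every $\lambda>0$.

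Finally I would optimize the exponent. Minimizing $-\lambda z+c_\lambda r$ over $\lambda>0$: the derivative vanishes at $\lambda^\star=\frac1d\ln\!\bigl(1+\frac{zd}{r}\bigr)$, and substituting this value, with $u\triangleq\frac{zd}{r}$ and $z=ur/d$, collapses the exponent to $\frac{r}{d^2}\bigl[(1+u)\ln(1+u)-u\bigr]$, which equals $\frac{z^2}{2r}B(u)$ with $B$ as in \eqref{eq: B}; this is precisely \eqref{eq: inequality in exercise of Amir and Ofer's book}. The one‑step estimate and this final one‑variable optimization are routine calculus. The step I expect to be the main obstacle is the passage from the uniform‑in‑$n$ event to $\{\tau<\infty\}$ together with the optional‑stopping argument: one must use the monotonicity of $Q_n$ for the reduction, verify that $\tau$ is genuinely a stopping time, and legitimize the limit in the supermartingale inequality (via a.s. convergence of nonnegative supermartingales combined with Fatou), since $\tau$ need not be bounded or integrable.
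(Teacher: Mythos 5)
Your proposal is correct and follows essentially the same route as the paper: the exponential supermartingale $\exp(\lambda(X_n-X_0)-c_\lambda Q_n)$ built from the Bennett-type one-step bound, an optional-stopping argument, the reduction of the maximal event to $\{\tau<\infty\}$ via monotonicity of $Q_n$, and the optimization $\lambda^\star=\frac1d\ln(1+\frac{zd}{r})$ yielding the exponent $\frac{z^2}{2r}B(\frac{zd}{r})$. The only (immaterial) difference is technical: you justify optional stopping via the stopped nonnegative supermartingale and Fatou's lemma, whereas the paper uses Doob's sampling theorem with stopping times bounded by $M$ and then lets $M\to\infty$ by continuity along nondecreasing events.
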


The proof of Theorem~1.6 in Freedman (1975) is modified
by using Bennett's inequality (see Bennett (1962)) for the derivation of
the original bound in Theorem~\ref{theorem: from the book of Dembo and Zeitouni}
(without the conditional symmetry requirement). Furthermore, this modified proof
serves to derive the improved bound in
Theorem~\ref{theorem: improved inequality for the exercise
of Amir and Ofer's book w.r.t. conditionally symmetric martingales}
under the conditional symmetry assumption.

In the following, the inequalities are extended to discrete-time, real-valued, and
conditionally symmetric sub/ supermartingales.

\vspace*{0.1cm}
\begin{definition}
Let $\{X_k, \mathcal{F}_k\}_{k \in \naturals_0}$ be a discrete-time
real-valued sub or supermartingale,
and $\eta_k \triangleq X_k - \expectation[X_k | \mathcal{F}_{k-1}]$
for every $k \in \naturals.$
Then the martingale $\{X_k, \mathcal{F}_k\}_{k \in \naturals_0}$
is called, respectively, a conditionally symmetric sub or supermartingale
if, conditioned on $\mathcal{F}_{k-1}$, the random variable $\eta_k$ is
symmetrically distributed around zero.
\label{definition: conditionally symmetric sub/supermartingales}
\end{definition}
\begin{remark}
For martingales, $\eta_k = \xi_k$ for every $k \in \naturals$, so we obtain
consistency with Definition~\ref{definition: conditionally symmetric martingales}.
\end{remark}

\vspace*{0.1cm}
An extension of Theorem~\ref{theorem: a concentration inequality
for a symmetric conditional distribution} to conditionally symmetric
sub and supermartingales is introduced in the following.

\begin{corollary}
Let $\{X_k, \mathcal{F}_k\}_{k \in \naturals_0}$ be a
discrete-time, real-valued and conditionally symmetric supermartingale.
Assume that, for some constants $d, \sigma > 0$, the following two
requirements are satisfied a.s.
\begin{equation}
\eta_k \leq d, \quad \quad
\text{Var} (X_k | \mathcal{F}_{k-1}) \triangleq \expectation
\bigl[\eta_k^2 \, | \, \mathcal{F}_{k-1} \bigr] \leq
\sigma^2 \label{eq: condition on the conditional variance}
\end{equation}
for every $k \in \naturals$. Then, for every $\alpha \geq
0$ and $n \in \naturals$,
\begin{equation}
\pr\Bigl(\max_{1 \leq k \leq n} (X_k-X_0) \geq \alpha n\Bigr) \leq \exp\bigl(-n \,
E(\gamma, \delta) \bigr) \label{eq: concentration inequality
for conditionally symmetric supermartingales}
\end{equation}
where $\gamma$ and $\delta$ are defined in \eqref{eq:
notation}, and $E(\gamma, \delta)$ is introduced in
\eqref{eq: exponent for conditional symmetric martingales}.
Alternatively, if $\{X_k,
\mathcal{F}_k\}_{k \in \naturals_0}$ is a conditionally
symmetric submartingale, the same
bound holds for $\pr\bigl(\min_{1 \leq k \leq n} (X_k-X_0) \leq
-\alpha n\bigr)$ provided that
$\eta_k \geq -d$ and the second condition in
\eqref{eq: condition on the conditional variance}
hold a.s. for every $k \in \naturals$.
If $\delta>1$, then these two probabilities are
zero. \label{corollary: concentration inequality for conditionally
symmetric sub and supermartingales}
\end{corollary}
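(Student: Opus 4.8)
\medskip
\noindent\emph{Proof sketch.}
The plan is to reduce the statement to Theorem~\ref{theorem: a concentration inequality for a symmetric conditional distribution} by replacing the supermartingale $\{X_k\}$ with an auxiliary martingale formed from its conditionally centered increments. First I would record a preliminary fact: since, conditioned on $\mathcal{F}_{k-1}$, the increment $\eta_k$ is symmetrically distributed about zero and satisfies $\eta_k \leq d$ a.s., the symmetry forces $\eta_k \geq -d$ a.s.\ as well, so $|\eta_k| \leq d$ a.s.; boundedness also gives $\eta_k \in L^1$ and symmetry gives $\expectation[\eta_k \,|\, \mathcal{F}_{k-1}] = 0$. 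Now set $Y_0 \triangleq 0$ and $Y_k \triangleq \sum_{j=1}^k \eta_j$. Then $\{Y_k, \mathcal{F}_k\}$ is a martingale, its increments $Y_k - Y_{k-1} = \eta_k$ are conditionally symmetric about zero, and by the preliminary fact together with the second requirement in \eqref{eq: condition on the conditional variance} it satisfies $|Y_k - Y_{k-1}| \leq d$ and $\text{Var}(Y_k \,|\, \mathcal{F}_{k-1}) \leq \sigma^2$ a.s.; hence $\{Y_k\}$ meets the hypotheses \eqref{eq: assumptions related to Theorem 2} of Theorem~\ref{theorem: a concentration inequality for a symmetric conditional distribution}.

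The key comparison is the pathwise inequality $X_k - X_0 \leq Y_k$ for every $k$. Writing $X_j - X_{j-1} = \eta_j + \bigl(\expectation[X_j \,|\, \mathcal{F}_{j-1}] - X_{j-1}\bigr)$ and using the supermartingale property $\expectation[X_j \,|\, \mathcal{F}_{j-1}] \leq X_{j-1}$, we get $X_j - X_{j-1} \leq \eta_j$; summing over $j = 1, \ldots, k$ yields $X_k - X_0 \leq Y_k$. Consequently $\{\max_{1 \leq k \leq n}(X_k - X_0) \geq \alpha n\} \subseteq \{\max_{1 \leq k \leq n} Y_k \geq \alpha n\}$, so it suffices to bound $\pr(\max_{1 \leq k \leq n} Y_k \geq \alpha n)$. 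At this point I would invoke the \emph{one-sided} form of Theorem~\ref{theorem: a concentration inequality for a symmetric conditional distribution}: its proof, which proceeds through a supermartingale of the form $\{e^{xY_k}\}$ and Doob's maximal inequality, actually establishes $\pr(\max_{1 \leq k \leq n} Y_k \geq \alpha n) \leq \exp\bigl(-n E(\gamma, \delta)\bigr)$, the factor $2$ in \eqref{eq: a concentration inequality for a symmetric conditional distribution} coming only from a union bound over the upper and lower tails. This yields \eqref{eq: concentration inequality for conditionally symmetric supermartingales}.

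For the submartingale assertion I would apply the supermartingale case to $-X_k$, whose conditionally centered increments are $-\eta_k$; these remain conditionally symmetric about zero, are bounded above by $d$ precisely because $\eta_k \geq -d$, and have the same conditional variance, so the hypotheses are met. Since $\max_{1 \leq k \leq n}\bigl(-(X_k - X_0)\bigr) = -\min_{1 \leq k \leq n}(X_k - X_0)$, the bound on $\pr(\min_{1 \leq k \leq n}(X_k - X_0) \leq -\alpha n)$ follows. Finally, when $\delta > 1$, i.e.\ $\alpha > d$, the chain $X_k - X_0 \leq Y_k \leq \sum_{j=1}^k |\eta_j| \leq nd < \alpha n$ (and its analogue for $-X_k$) shows the relevant event is empty, so both probabilities vanish.

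The step I expect to be the main obstacle is the appeal to the one-sided, un-doubled exponential bound on $\pr(\max_{1 \leq k \leq n} Y_k \geq \alpha n)$: this requires reading it off from the argument proving Theorem~\ref{theorem: a concentration inequality for a symmetric conditional distribution} rather than from its stated two-sided form, and checking that the single-tail Chernoff-type estimate there goes through verbatim for $\{Y_k\}$. The remaining ingredients — deducing $|\eta_k| \leq d$ from conditional symmetry, verifying that $\{Y_k\}$ inherits the hypotheses, the supermartingale comparison, and the reduction of the submartingale case by a sign change — are routine.
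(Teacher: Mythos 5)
Your proposal is correct and follows essentially the same route as the paper: reduce to the martingale $Y_k=\sum_{j=1}^k\eta_j$, use the supermartingale comparison $X_k-X_0\leq Y_k$, and invoke the one-sided bound $\exp\bigl(-nE(\gamma,\delta)\bigr)$ established inside the proof of Theorem~\ref{theorem: a concentration inequality for a symmetric conditional distribution}, with the submartingale case handled by a sign change. Your explicit remark that conditional symmetry together with $\eta_k\leq d$ forces $|\eta_k|\leq d$ is a detail the paper leaves implicit, but it does not change the argument.
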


\vspace*{0.1cm}
The following statement extends Theorem~\ref{theorem: improved
inequality for the exercise of Amir and Ofer's book w.r.t.
conditionally symmetric martingales} to conditionally symmetric
supermartingales.

\begin{corollary}
Let $\{X_n, \mathcal{F}_n\}_{n \in \naturals_0}$ be a
discrete-time, real-valued supermartingale. Assume that
there exists a fixed number $d>0$ such that $\eta_k \leq d$
a.s. for every $k \in \naturals$. Let $\{Q_n\}_{n \in \naturals_0}$ be
the predictable quadratic variations of the supermartingale, i.e.,
$Q_n \triangleq \sum_{k=1}^n \expectation[\eta_k^2 \, | \, \mathcal{F}_{k-1}]$
for every $n \in \naturals$ with $Q_0 \triangleq 0$.
Then, the result in \eqref{eq: inequality in
exercise of Amir and Ofer's book} holds.
Furthermore, if the supermartingale is conditionally symmetric,
then the improved bound in \eqref{eq: improved inequality for the exercise
of Amir and Ofer's book w.r.t. conditionally symmetric martingales}
holds.
\label{corollary: adaptation of Theorems 6 and 7 to supermartingales}
\end{corollary}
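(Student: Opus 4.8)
The plan is to reduce the supermartingale statement to the martingale results already established in Theorems~\ref{theorem: from the book of Dembo and Zeitouni} and~\ref{theorem: improved inequality for the exercise of Amir and Ofer's book w.r.t. conditionally symmetric martingales} by means of the Doob decomposition. Given the supermartingale $\{X_n, \mathcal{F}_n\}_{n \in \naturals_0}$, I would write $X_n = X_0 + M_n + A_n$ with $M_0 = A_0 = 0$, where $M_n \triangleq \sum_{k=1}^{n} \eta_k = \sum_{k=1}^{n}\bigl(X_k - \expectation[X_k \,|\, \mathcal{F}_{k-1}]\bigr)$ and $A_n \triangleq \sum_{k=1}^{n}\bigl(\expectation[X_k \,|\, \mathcal{F}_{k-1}] - X_{k-1}\bigr)$. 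Then $\{M_n, \mathcal{F}_n\}_{n \in \naturals_0}$ is a martingale whose increments are exactly the $\eta_k$'s, and $\{A_n\}$ is a predictable process that is non-increasing, since the supermartingale property gives $\expectation[X_k \,|\, \mathcal{F}_{k-1}] \leq X_{k-1}$ a.s.; in particular $A_n \leq 0$ a.s. for every $n \in \naturals$.

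Next I would verify that $M$ inherits the two hypotheses needed to invoke the martingale theorems: its jumps satisfy $M_k - M_{k-1} = \eta_k \leq d$ a.s., and in the conditionally symmetric case $\eta_k$ is, conditioned on $\mathcal{F}_{k-1}$, symmetrically distributed around zero, which is precisely the conditional symmetry of $M$ in the sense of Definition~\ref{definition: conditionally symmetric martingales}. Moreover, the predictable quadratic variation is unchanged: $\sum_{k=1}^{n} \expectation\bigl[(M_k - M_{k-1})^2 \,|\, \mathcal{F}_{k-1}\bigr] = \sum_{k=1}^{n} \expectation[\eta_k^2 \,|\, \mathcal{F}_{k-1}] = Q_n$. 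The final step is the event inclusion: since $A_k \leq 0$, one has $X_k - X_0 = M_k + A_k \leq M_k - M_0$ for every $k$, so on the event $\{\max_{1 \leq k \leq n}(X_k - X_0) \geq z,\ Q_n \leq r \text{ for some } n \in \naturals\}$ we also have $\max_{1 \leq k \leq n}(M_k - M_0) \geq z$ with the same $Q_n \leq r$. Hence this event is contained in the corresponding event for $M$, and monotonicity of probability together with Theorem~\ref{theorem: from the book of Dembo and Zeitouni} (resp.\ Theorem~\ref{theorem: improved inequality for the exercise of Amir and Ofer's book w.r.t. conditionally symmetric martingales} in the conditionally symmetric case) yields the bound~\eqref{eq: inequality in exercise of Amir and Ofer's book} (resp.~\eqref{eq: improved inequality for the exercise of Amir and Ofer's book w.r.t. conditionally symmetric martingales}).

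The argument is essentially bookkeeping, so there is no deep obstacle; the point requiring the most care is confirming that the proofs of the two martingale theorems — which proceed via an exponential supermartingale built from a Bennett-type bound on $\expectation[e^{t\eta_k} \,|\, \mathcal{F}_{k-1}]$, combined with a stopping-time argument at the level $Q_n \leq r$ — use no hypothesis on the process beyond $\eta_k \leq d$ (and, where invoked, the conditional symmetry), so that they apply verbatim to $M$. I would also record explicitly that the supremum over $n \in \naturals$ poses no difficulty, since the inclusion of events holds pathwise for each fixed $n$ and therefore for the union over $n$.
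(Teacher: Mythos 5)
Your proposal is correct and follows essentially the same route as the paper: the paper likewise introduces the martingale $Y_n=\sum_{j=1}^n\eta_j$ (the martingale part of the Doob decomposition), observes that its jumps are the $\eta_k$'s so its predictable quadratic variation is the same $Q_n$ and conditional symmetry transfers, uses $X_k-X_0\le Y_k-Y_0$ from the non-positive drift to get the event inclusion, and then applies Theorem~\ref{theorem: from the book of Dembo and Zeitouni} (resp.\ Theorem~\ref{theorem: improved inequality for the exercise of Amir and Ofer's book w.r.t. conditionally symmetric martingales}) to $Y$. No gaps.
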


\subsection{Construction of Discrete-Time, Real-Valued and Conditionally Symmetric Sub/ Supermartingales}
\label{subsection: Construction of conditionally symmetric sub/ supermartingales}
Before proving the tightened inequalities for discrete-time
conditionally symmetric sub/ supermartingales, it is worth
exemplifying the construction of these objects.
\begin{example}
Let $(\Omega, \mathcal{F}, \pr)$ be a probability space, and let
$\{U_k\}_{k \in \naturals} \subseteq L^1(\Omega, \mathcal{F},
\pr)$ be a sequence of independent random variables with zero mean.
Let $\{\mathcal{F}_k\}_{k \geq 0}$ be the natural filtration of sub
$\sigma$-algebras of $\mathcal{F}$, where
$\mathcal{F}_0 = \{\emptyset, \Omega\}$ and $\mathcal{F}_k =
\sigma(U_1, \ldots, U_k)$ for $k \geq 1.$ Furthermore,
for $k \in \naturals$, let $A_k \in L^{\infty}(\Omega, \mathcal{F}_{k-1},
\pr)$ be an $\mathcal{F}_{k-1}$-measurable random variable with a
finite essential supremum. Define a new sequence of random variables
in $L^1(\Omega, \mathcal{F}, \pr)$ where
$$X_n = \sum_{k=1}^n A_k U_k, \; \; \forall \, n \in \naturals$$
and $X_0 = 0$.
Then, $\{X_n, \mathcal{F}_n\}_{n \in \naturals_0}$ is a martingale.
Let us assume that the random variables $\{U_k\}_{k \in
\naturals}$ are symmetrically distributed around zero. Note that
$X_n = X_{n-1} + A_n U_n$ where $A_n$
is $\mathcal{F}_{n-1}$-measurable and $U_n$ is independent of the
$\sigma$-algebra $\mathcal{F}_{n-1}$ (due to the independence of
the random variables $U_1, \ldots, U_n$). It therefore follows
that for every $n \in \naturals$, given $\mathcal{F}_{n-1}$, the
random variable $X_n$ is symmetrically distributed around its
conditional expectation $X_{n-1}$. Hence, the martingale $\{X_n,
\mathcal{F}_n\}_{n \in \naturals_0}$ is conditionally
symmetric.
\label{example1: constructing a conditionally symmetric martingale from iid RVs}
\end{example}

\vspace*{0.1cm}
\begin{example}
In continuation of
Example~\ref{example1: constructing a conditionally symmetric martingale from iid RVs},
let $\{X_n, \mathcal{F}_n\}_{n \in \naturals_0}$ be a martingale, and
define $Y_0=0$ and $$Y_n = \sum_{k=1}^n A_k (X_k - X_{k-1}), \quad
\forall \, n \in \naturals.$$ The sequence $\{Y_n,
\mathcal{F}_n\}_{n \in \naturals_0}$ is a martingale.
If $\{X_n, \mathcal{F}_n\}_{n \in \naturals_0}$ is a conditionally
symmetric martingale then also the
martingale $\{Y_n, \mathcal{F}_n\}_{n \in \naturals_0}$
is conditionally symmetric (since $Y_n =
Y_{n-1} + A_n (X_n - X_{n-1})$, and by assumption $A_n$ is
$\mathcal{F}_{n-1}$-measurable).
\label{example2: constructing a conditionally symmetric martingale from a conditionally symmetric martingale}
\end{example}

\vspace*{0.1cm}
\begin{example}
In continuation of Example~\ref{example1: constructing a conditionally
symmetric martingale from iid RVs}, let $\{U_k\}_{k \in \naturals}$
be independent random variables with a symmetric
distribution around their expected value, and also assume that
$\expectation(U_k) \leq 0$ for every $k \in \naturals$.
Furthermore, let $A_k \in L^{\infty}(\Omega, \mathcal{F}_{k-1},
\pr)$, and assume that a.s. $A_k \geq 0$
for every $k \in \naturals$. Let $\{X_n,
\mathcal{F}_n\}_{n \in \naturals_0}$ be a martingale as defined in
Example~\ref{example1: constructing a conditionally
symmetric martingale from iid RVs}.
Note that $X_n = X_{n-1} + A_n U_n$ where $A_n$ is
non-negative and $\mathcal{F}_{n-1}$-measurable, and
$U_n$ is independent of $\mathcal{F}_{n-1}$
and symmetrically distributed around its average.
This implies that $\{X_n, \mathcal{F}_n\}_{n \in \naturals_0}$
is a conditionally symmetric supermartingale.
\label{example3: constructing a conditionally symmetric sub
or supermartingale from iid RVs}
\end{example}

\vspace*{0.1cm}
\begin{example}
In continuation of
Examples~\ref{example2: constructing a conditionally symmetric
martingale from a conditionally symmetric martingale}
and~\ref{example3: constructing a conditionally symmetric sub
or supermartingale from iid RVs}, let
$\{X_n, \mathcal{F}_n\}_{n \in \naturals_0}$ be a conditionally
symmetric supermartingale. Define $\{Y_n\}_{n \in \naturals_0}$
as in Example~\ref{example2: constructing a conditionally symmetric
martingale from a conditionally symmetric martingale} where
$A_k$ is non-negative a.s. and $\mathcal{F}_{k-1}$-measurable
for every $k \in \naturals$.
Then $\{Y_n, \mathcal{F}_n\}_{n \in \naturals_0}$ is a conditionally
symmetric supermartingale.
\label{example4: constructing a conditionally symmetric sub/supermartingale
from a conditionally symmetric sub/supermartingale}
\end{example}

%\vspace*{0.1cm}
%\begin{example}
%Consider a standard Brownian motion $(W_t)_{t \geq 0}$.
%Define, for some $T>0$, the discrete-time process
%$$X_n = W_{n T}, \quad \mathcal{F}_n = \sigma(\{W_t\}_{0
%\leq t \leq nT}), \quad \forall \, n \in \naturals_0.$$
%The increments of $(W_t)_{t \geq 0}$ over time intervals
%$[t_{k-1}, t_k]$ are statistically independent
%if these intervals do not overlap (except of their endpoints),
%and they are Gaussian distributed with a zero mean
%and variance $t_k - t_{k-1}$.
%The random variable $\xi_n \triangleq X_n - X_{n-1}$
%is therefore statistically independent of
%$\mathcal{F}_{n-1}$, and it is Gaussian distributed
%with a zero mean and variance $T$. The martingale
%$\{X_n, \mathcal{F}_n\}_{n \in \naturals_0}$ is
%therefore conditionally symmetric.
%\label{example5: Brownian motion}
%\end{example}

\section{Proofs}
\label{section: proofs}
\subsection{Proof of Theorem~\ref{theorem: a concentration inequality
for a symmetric conditional distribution}}
We rely here on the proof of the existing bound that is stated
in Theorem~\ref{theorem: first refined concentration inequality},
for discrete-time real-valued martingales with bounded jumps
(see Theorem~6.1 in McDiarmid (1989)
and Corollary~2.4.7 in Dembo and Zeitouni (1997)),
and then deviate from this proof at the point where the
additional property of the conditional symmetry of the
martingale is taken into consideration for the derivation
of the improved exponential inequality in Theorem~\ref{theorem:
a concentration inequality for a symmetric conditional distribution}.

Write
$X_n - X_0 = \sum_{k=1}^n \xi_k$ where $\xi_k \triangleq X_k - X_{k-1}$ for
$k \in \naturals$. Since
$\{X_k - X_0, \mathcal{F}_k\}_{k \in \naturals_0}$
is a martingale, $h(x) = \exp(tx)$ is a convex
function on $\reals$ for every $t \in \reals$, and a composition of a
convex function with a martingale gives a submartingale w.r.t.
the same filtration, $\bigl\{\exp(t(X_k - X_0)), \mathcal{F}_k\bigr\}_{k \in \naturals_0}$
is a sub-martingale for every $t \in \reals$. By applying the maximal inequality
for submartingales, then for every $\alpha \geq 0$ and $n \in \naturals$
\begin{eqnarray}
&& \pr\Bigl(\max_{1 \leq k \leq n} (X_k - X_0) \geq \alpha n\Bigr)  \nonumber \\
&& \leq \exp(-\alpha n t) \;
\expectation \Bigl[ \exp \bigl(t (X_n - X_0) \bigr) \Bigr] \quad \quad \forall \, t \geq 0 \nonumber \\
&& = \exp(-\alpha n t) \; \expectation \left[\exp \biggl(t
\sum_{k=1}^n \xi_k \biggr) \right].
\label{eq: maximal inequality for submartingales}
\end{eqnarray}
Furthermore,
\vspace*{-0.4cm}
\begin{equation}
\expectation \biggl[ \exp \biggl(t \sum_{k=1}^n \xi_k \biggr)
\biggr] = \expectation \Biggl[ \exp \biggl(t \sum_{k=1}^{n-1} \xi_k
\biggr) \, \expectation \bigl[ \exp(t \xi_n) \, | \,
\mathcal{F}_{n-1} \bigr] \Biggr] \label{eq: smoothing theorem}
\end{equation}
where this equality holds since $\exp \bigl(t
\sum_{k=1}^{n-1} \xi_k \bigr)$ is $\mathcal{F}_{n-1}$-measurable.

\vspace*{0.1cm}
In order to prove Theorem~\ref{theorem: a concentration inequality
for a symmetric conditional distribution} for a discrete-time, real-valued and
conditionally symmetric martingale with bounded jumps, we deviate from the proof
of Theorem~\ref{theorem: first refined concentration inequality}. This is done
by a replacement of Bennett's inequality (see Bennett (1962)) for the conditional
expectation with a tightened bound under the conditional symmetry assumption. The
following lemma appears in several probability textbooks on stochastic ordering
(see, e.g., Denuit et al. (2005)), and will be useful in our analysis.

\begin{lemma}
Let $X$ be a real-valued random variable with a symmetric distribution around zero,
and a support $[-d, d]$, and assume that
$\expectation[X^2] = \text{Var}(X) \leq \gamma d^2$ for some $d>0$ and
$\gamma \in [0,1]$.
Let $h$ be a real-valued convex function, and assume that $h(d^2) \geq h(0)$. Then
\begin{equation}
\expectation[h(X^2)] \leq (1-\gamma) h(0) + \gamma h(d^2)
\label{eq: inequality for symmetric distributions}
\end{equation}
where equality holds for the
symmetric distribution
\begin{eqnarray}
\pr(X = d) = \pr(X = -d) = \frac{\gamma}{2}, \quad
\pr(X = 0) = 1-\gamma.
\label{eq: symmetric distribution}
\end{eqnarray}
\label{lemma: inequality for symmetric distributions}
\end{lemma}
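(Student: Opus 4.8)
The plan is to reduce the statement to a one-dimensional convexity argument on the variable $Y \triangleq X^2$, which takes values in $[0, d^2]$. First I would observe that since $h$ is convex on $[0,d^2]$ and $h(d^2) \geq h(0)$, the chord of $h$ joining the endpoints $0$ and $d^2$ lies above the graph of $h$ on the interval, i.e.\ for every $y \in [0,d^2]$
\begin{equation}
h(y) \leq \Bigl(1 - \tfrac{y}{d^2}\Bigr) h(0) + \tfrac{y}{d^2}\, h(d^2).
\end{equation}
Applying this pointwise with $y = X^2$ and taking expectations gives
\begin{equation}
\expectation[h(X^2)] \leq \Bigl(1 - \tfrac{\expectation[X^2]}{d^2}\Bigr) h(0) + \tfrac{\expectation[X^2]}{d^2}\, h(d^2).
\end{equation}
The right-hand side is an affine (hence monotone) function of $\expectation[X^2]$ with slope $\frac{h(d^2)-h(0)}{d^2} \geq 0$; since $\expectation[X^2] \leq \gamma d^2$ by hypothesis, the bound only increases when $\expectation[X^2]$ is replaced by $\gamma d^2$, yielding exactly \eqref{eq: inequality for symmetric distributions}.

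For the equality claim, I would simply substitute the three-point distribution in \eqref{eq: symmetric distribution} into both sides: it is symmetric around zero, supported in $[-d,d]$, and has $\expectation[X^2] = \gamma d^2$, so $\expectation[h(X^2)] = (1-\gamma) h(0) + \gamma h(d^2)$, matching the bound. Note that the symmetry of $X$ is not actually needed for the inequality itself — only boundedness of the support and the variance constraint — but it is what makes the equality case a genuine symmetric distribution, and it is the form in which the lemma will be invoked (with $X = \xi_n$ conditioned on $\mathcal{F}_{n-1}$, and $h(u) = \cosh(t\sqrt{u})$-type convex functions arising from the even part of $e^{t\xi}$).

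There is essentially no hard step here; the only point requiring a modicum of care is justifying that $y \mapsto h(y)$ being convex on $[0,d^2]$ makes the secant-line inequality valid on the closed interval (standard) and that all expectations are finite, which follows since $X$ is bounded and $h$, being convex on a bounded interval, is continuous and hence bounded there. I would state these as immediate consequences rather than belabor them.
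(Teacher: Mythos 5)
Your proof is correct and is essentially the same as the paper's: both bound $h(X^2)$ pointwise by the secant line of $h$ over $[0,d^2]$ (the paper writes this as $h(X^2) \leq h(0) + (X/d)^2\bigl(h(d^2)-h(0)\bigr)$), take expectations, and use $\expectation[X^2] \leq \gamma d^2$ together with $h(d^2) \geq h(0)$, with the same verification of equality for the three-point distribution. Your remark that the symmetry of $X$ is not needed for the inequality itself is also accurate.
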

\begin{proof}
Since $h$ is convex and $\text{supp}(X)=[-d,d]$,
then a.s. $h(X^2) \leq h(0) + \left(\frac{X}{d}\right)^2
\bigl(h(d^2)-h(0)\bigr).$
Taking expectations on both sides gives \eqref{eq: inequality for symmetric distributions},
which holds with equality for the symmetric distribution in \eqref{eq: symmetric distribution}.
\end{proof}

\vspace*{0.1cm}
\begin{corollary}
If $X$ is a random variable that satisfies the three requirements in
Lemma~\ref{lemma: inequality for symmetric distributions} then, for
every $\lambda \in \reals$,
\begin{equation}
\expectation\bigl[\exp(\lambda X)\bigr] \leq 1 + \gamma
\bigl[\cosh(\lambda d)-1 \bigr] \label{eq:
Refined Bennett's inequality for symmetric distributions}
\end{equation}
and \eqref{eq: Refined Bennett's inequality for symmetric distributions}
holds with equality for the symmetric distribution in
Lemma~\ref{lemma: inequality for symmetric distributions},
independently of the value of $\lambda$.
\label{corollary: refined Bennett's inequality for
symmetric distributions}
\end{corollary}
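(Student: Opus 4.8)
The plan is to reduce the claim to a direct application of Lemma~\ref{lemma: inequality for symmetric distributions}. First I would use the symmetry of $X$ around zero: since $X$ and $-X$ have the same law, $\expectation[\exp(\lambda X)] = \expectation[\exp(-\lambda X)]$, and hence $\expectation[\exp(\lambda X)] = \frac12\expectation\bigl[\exp(\lambda X) + \exp(-\lambda X)\bigr] = \expectation[\cosh(\lambda X)]$ for every $\lambda \in \reals$. Next, since $\cosh$ is an even function, $\cosh(\lambda X) = \cosh\bigl(\lambda\,|X|\bigr) = h(X^2)$, where $h(u) \triangleq \cosh(\lambda\sqrt{u})$ for $u \geq 0$. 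Thus $\expectation[\exp(\lambda X)] = \expectation[h(X^2)]$, and it remains to bound the right-hand side by $(1-\gamma)h(0) + \gamma h(d^2) = (1-\gamma) + \gamma\cosh(\lambda d) = 1 + \gamma[\cosh(\lambda d)-1]$, which is exactly the conclusion of Lemma~\ref{lemma: inequality for symmetric distributions} applied to the function $h$.

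To invoke the lemma I must verify its two hypotheses on $h$. The condition $h(d^2) \geq h(0)$ is immediate, since $h(d^2) = \cosh(\lambda d) \geq 1 = \cosh(0) = h(0)$. The substantive point — and the only step that is not entirely routine — is the convexity of $h$ on $[0,\infty)$. I would establish this from the power-series expansion
\[
h(u) = \cosh(\lambda\sqrt{u}) = \sum_{n=0}^{\infty} \frac{\lambda^{2n}}{(2n)!}\,u^n,
\]
which converges for all $u \geq 0$; since each coefficient $\frac{\lambda^{2n}}{(2n)!}$ is non-negative and each monomial $u \mapsto u^n$ is convex on $[0,\infty)$, the function $h$ is a convergent sum of convex functions, hence convex. (Alternatively, one may compute $h''(u)$ directly for $u>0$ and check non-negativity, together with a one-sided check at $u=0$.)

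Finally, the equality assertion follows by direct substitution of the distribution $\pr(X=d) = \pr(X=-d) = \frac{\gamma}{2}$, $\pr(X=0) = 1-\gamma$ from Lemma~\ref{lemma: inequality for symmetric distributions}: here $\expectation[\exp(\lambda X)] = \frac{\gamma}{2}\bigl(e^{\lambda d} + e^{-\lambda d}\bigr) + (1-\gamma) = \gamma\cosh(\lambda d) + (1-\gamma) = 1 + \gamma[\cosh(\lambda d)-1]$, which matches the bound for every $\lambda \in \reals$, independently of the sign of $\lambda$. Since the only nontrivial ingredient is the elementary convexity fact above, I do not anticipate any real obstacle in carrying this out.
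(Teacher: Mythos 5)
Your proposal is correct and follows essentially the same route as the paper: symmetry gives $\expectation[\exp(\lambda X)]=\expectation[\cosh(\lambda X)]$, then $\cosh(\lambda X)=h(X^2)$ with $h$ convex because its power series has non-negative coefficients, and Lemma~\ref{lemma: inequality for symmetric distributions} finishes the bound; your explicit check of the equality case matches the lemma's equality distribution.
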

\begin{proof}
For every $\lambda \in \reals$, due to the symmetric distribution
of $X$, $\expectation\bigl[\exp(\lambda X)\bigr] =
\expectation\bigl[\cosh(\lambda X)\bigr].$
The claim now follows from
Lemma~\ref{lemma: inequality for symmetric distributions}
since, for every $x \in \reals$, $\cosh(\lambda x) = h(x^2)$
where $h(x) \triangleq \sum_{n=0}^{\infty} \frac{\lambda^{2n}
|x|^n}{(2n)!}$ is a convex function ($h$ is convex since it is
a linear combination, with non-negative coefficients, of
convex functions), and $h(d^2) = \cosh(\lambda d) \geq 1 = h(0)$.
\end{proof}

\vspace*{0.1cm}
We continue with the proof of Theorem~\ref{theorem: a
concentration inequality for a symmetric conditional
distribution}. Under the assumption of this theorem, for every $k
\in \naturals$, the random variable $\xi_k \triangleq X_k -
X_{k-1}$ satisfies a.s.
$\expectation[\xi_k \, | \, \mathcal{F}_{k-1}] = 0$ and
$\expectation[(\xi_k)^2 \, | \, \mathcal{F}_{k-1}] \leq \sigma^2$.
Applying Corollary~\ref{corollary: refined Bennett's inequality for
symmetric distributions} for the conditional law of $\xi_k$ given
$\mathcal{F}_{k-1}$, it follows that for every $k \in \naturals$
and $t \in \reals$
\begin{equation}
\expectation \left[ \exp(t \xi_k) \, | \, \mathcal{F}_{k-1}
\right] \leq 1 + \gamma \bigl[ \cosh(td) - 1 \bigr]
\label{eq: Refined Bennett's inequality for the conditional law of
xi_k with symmetry condition}
\end{equation}
holds a.s., and therefore it follows from \eqref{eq: smoothing theorem} and
\eqref{eq: Refined Bennett's inequality for the conditional law of xi_k with
symmetry condition} that for every $t \in \reals$
\begin{equation}
\expectation \biggl[ \exp \biggl(t \sum_{k=1}^n \xi_k \biggr)
\biggr] \leq \Bigl( 1 + \gamma \bigl[ \cosh(td) - 1 \bigr] \Bigr)^n.
\label{eq: inequality for the expected value of the sum under a symmetry assumption}
\end{equation}
Therefore, from \eqref{eq: maximal inequality for submartingales}, for every $t \geq 0$,
\begin{eqnarray}
\pr\Bigl(\max_{1 \leq k \leq n} (X_k-X_0) \geq \alpha n\Bigr) \leq \exp(-\alpha n t)
\Bigl( 1 + \gamma \bigl[ \cosh(td) - 1 \bigr] \Bigr)^n.
\label{eq: intermediate step}
\end{eqnarray}
From \eqref{eq: notation}, and by using a replacement of $td$ with $x$, then
for an arbitrary $\alpha \geq 0$ and $n \in \naturals$
\begin{equation}
\pr\Bigl(\max_{1 \leq k \leq n} (X_k-X_0) \geq \alpha n\Bigr)
\leq  \inf_{x \geq 0} \; \left\{ \exp\left(-n \Bigl[\delta x -  \ln \bigl( 1 + \gamma
\bigl[ \cosh(x) - 1 \bigr] \bigr) \Bigr] \right) \right\}.
\label{eq: a one-sided concentration inequality for a symmetric conditional distribution}
\end{equation}
An optimization over the non-negative parameter $x$ gives the
solution for the optimized parameter in \eqref{eq: optimal
value of x in the exponent for conditional symmetric martingales}.
Applying \eqref{eq: a one-sided concentration inequality for a
symmetric conditional distribution} to the martingale $\{-X_k,
\mathcal{F}_k\}_{k \in \naturals_0}$ gives the same bound
on $\pr(\min_{1 \leq k \leq n} (X_k - X_0) \leq -\alpha n)$.
This completes the proof of
Theorem~\ref{theorem: a concentration inequality for a symmetric
conditional distribution}.

\vspace*{0.1cm}
{\em Proof for the asymptotic optimality of the exponents in
Theorems~\ref{theorem: a concentration inequality for a symmetric conditional distribution}
and~\ref{theorem: first refined concentration inequality}}:
In the following, we show that under the conditions of
Theorem~\ref{theorem: a concentration inequality for a symmetric conditional distribution},
the exponent $E(\gamma, \delta)$ in
\eqref{eq: exponent for conditional symmetric martingales} and
\eqref{eq: optimal value of x in the exponent for conditional symmetric martingales}
is asymptotically optimal. To show this,
let $d>0$ and $\gamma \in (0,1]$, and let $U_1, U_2, \ldots$ be i.i.d.
random variables whose probability distribution is given by
\begin{eqnarray}
\pr(U_i = d) = \pr(U_i = -d) = \frac{\gamma}{2}, \quad
\pr(U_i = 0) = 1-\gamma, \quad \forall \, i \in \naturals.
\label{eq: achievable symmetric distribution}
\end{eqnarray}
Consider the particular case of the conditionally symmetric martingale
$\{X_n, \mathcal{F}_n\}_{n \in \naturals_0}$ in
Example~\ref{example1: constructing a conditionally symmetric martingale from iid RVs}
(see Section~\ref{subsection: Construction of conditionally symmetric sub/ supermartingales})
where $X_n \triangleq \sum_{i=1}^n U_i$ for $n \in \naturals$, and $X_0 \triangleq 0$.
It follows that $|X_n - X_{n-1}| \leq d$ and $\text{Var}(X_n | \mathcal{F}_{n-1}) = \gamma d^2$
a.s. for every $n \in \naturals$.
From Cram\'{e}r's theorem in $\reals$, for every
$\alpha \geq \expectation[U_1] = 0$,
\begin{eqnarray}
&& \lim_{n \rightarrow \infty} \frac{1}{n} \,
\ln \pr(X_n - X_0 \geq \alpha n) \nonumber \\
&& = \lim_{n \rightarrow \infty} \frac{1}{n} \,
\ln \pr\biggl(\frac{1}{n} \sum_{i=1}^n U_i
\geq \alpha\biggr) \nonumber \\
&& = -I(\alpha) \label{eq: Cramer's theorem}
\end{eqnarray}
where the rate function is given by
\begin{equation}
I(\alpha) = \sup_{t \geq 0}
\left\{t \alpha - \ln \expectation[\exp(t U_1)] \right\}
\label{eq: rate function}
\end{equation}
(see, e.g., Theorem~2.2.3 in Dembo and Zeitouni (1997) and
Lemma~2.2.5(b) in Dembo and Zeitouni (1997) for the
restriction of the supremum to the interval $[0, \infty)$).
From \eqref{eq: achievable symmetric distribution}
and \eqref{eq: rate function}, for every $\alpha \geq 0$,
$$I(\alpha) = \sup_{t \geq 0} \left\{ t \alpha -
\ln \bigl(1+\gamma [\cosh(td)-1]\bigr) \right\}$$
but this is equivalent to the optimized exponent on the
right-hand side of \eqref{eq: intermediate step},
giving the exponent of the bound in
Theorem~\ref{theorem: a concentration inequality
for a symmetric conditional distribution}.
Hence, $I(\alpha) = E(\gamma, \delta)$ in
\eqref{eq: exponent for conditional symmetric martingales}
and \eqref{eq: optimal value of x in the exponent for
conditional symmetric martingales}. This proves that the
exponent of the bound in Theorem~\ref{theorem: a concentration
inequality for a symmetric conditional distribution} is
indeed asymptotically optimal in the sense that there
exists a discrete-time real-valued and conditionally
symmetric martingale, satisfying the conditions in
\eqref{eq: assumptions related to Theorem 2} a.s., that
attains this exponent in the limit where $n \rightarrow \infty$.
The proof for the asymptotic optimality of the exponent in
Theorem~\ref{theorem: first refined concentration inequality}
(see the right-hand side of \eqref{eq: first refined concentration inequality})
is similar to the proof for
Theorem~\ref{theorem: a concentration inequality for a symmetric conditional distribution},
except that the i.i.d. random variables $U_1, U_2, \ldots$
are now distributed as follows:
$$\pr(U_i = d) = \frac{\gamma}{1+\gamma}, \quad \pr(U_i = -\gamma d) = \frac{1}{1+\gamma},
\quad \forall \, i \in \naturals$$
and, as before, the martingale $\{X_n, \mathcal{F}_n\}_{n \in \naturals_0}$
is defined by $X_n = \sum_{i=1}^n U_i$ and $\mathcal{F}_n = \sigma(U_1, \ldots, U_n)$
for every $n \in \naturals$ with $X_0=0$ and $\mathcal{F}_0=\{\emptyset, \Omega\}$
(in this case, it is not a conditionally symmetric martingale unless $\gamma=1$).

\subsection{Proof of Theorem~\ref{theorem: second inequality for conditionally
symmetric martingales}}
The starting point of the proof of Theorem~\ref{theorem: second inequality for conditionally
symmetric martingales} relies on \eqref{eq: maximal inequality for submartingales}
and \eqref{eq: smoothing theorem}.
For every $k \in \naturals$ and $t \in \reals$, since
$\expectation\bigl[\xi_k^{2l-1} \, | \, \mathcal{F}_{k-1}\bigr] = 0$ for every
$l \in \naturals$ (due to the conditionally symmetry property of the martingale),
\begin{eqnarray}
&& \hspace*{-0.5cm} \expectation \bigl[ \exp(t \xi_k) |
\mathcal{F}_{k-1} \bigr] \nonumber \\
&& \hspace*{-0.5cm} = 1 + \sum_{l=1}^{\frac{m}{2}-1} \frac{t^{2l} \,
\expectation\bigl[\xi_k^{2l} \, | \, \mathcal{F}_{k-1}\bigr]}{(2l)!} \,
+ \sum_{l=\frac{m}{2}}^{\infty}
\frac{t^{2l} \, \expectation\bigl[\xi_k^{2l} \, | \, \mathcal{F}_{k-1}\bigr]}{(2l)!} \nonumber \\
&& \hspace*{-0.5cm} = 1 + \sum_{l=1}^{\frac{m}{2}-1} \frac{(td)^{2l} \,
\expectation\bigl[\bigl(\frac{\xi_k}{d}\bigr)^{2l} \, | \, \mathcal{F}_{k-1}\bigr]}{(2l)!} \,
+ \sum_{l=\frac{m}{2}}^{\infty}
\frac{(td)^{2l} \, \expectation\bigl[\bigl(\frac{\xi_k}{d}\bigr)^{2l} \, | \, \mathcal{F}_{k-1}\bigr]}{(2l)!} \nonumber \\
&& \hspace*{-0.5cm} \leq 1 + \sum_{l=1}^{\frac{m}{2}-1} \frac{(td)^{2l} \,
\gamma_{2l}}{(2l)!} + \sum_{l=\frac{m}{2}}^{\infty}
\frac{(td)^{2l} \, \gamma_{m}}{(2l)!} \nonumber \\
&& \hspace*{-0.5cm} = 1 + \sum_{l=1}^{\frac{m}{2}-1} \frac{(td)^{2l} \,
\bigl(\gamma_{2l} - \gamma_m \bigr)}{(2l)!} + \gamma_m \bigl(\cosh(td)-1 \bigr)
\label{eq: chain of equalities for the conditional expectation}
\end{eqnarray}
where the inequality above holds since $|\frac{\xi_k}{d}| \leq 1$ a.s., so that
$0 \leq \ldots \leq \gamma_m \leq \ldots \leq \gamma_4 \leq \gamma_2 \leq 1$, and the
last equality in \eqref{eq: chain of equalities for the conditional expectation} holds
since $\cosh(x) = \sum_{n=0}^{\infty} \frac{x^{2n}}{(2n)!}$ for every $x \in \reals$.
Therefore, from \eqref{eq: smoothing theorem},
\begin{equation}
\expectation \biggl[ \exp \biggl(t \sum_{k=1}^n \xi_k \biggr) \biggr] \leq
\left(1 + \sum_{l=1}^{\frac{m}{2}-1} \frac{(td)^{2l} \,
\bigl(\gamma_{2l} - \gamma_m \bigr)}{(2l)!} + \gamma_m \bigl[\cosh(td)-1 \bigr] \right)^n
\label{eq: 2nd inequality for the expected value of the sum under a symmetry assumption}
\end{equation}
for an arbitrary $t \in \reals$. The inequality then follows from
\eqref{eq: maximal inequality for submartingales}.
This completes the proof of
Theorem~\ref{theorem: second inequality for conditionally symmetric martingales}.

\subsection{A Proof of Theorem~\ref{theorem: improved inequality
for the exercise of Amir and Ofer's book w.r.t. conditionally
symmetric martingales}}
The proof of Theorem~\ref{theorem: improved inequality
for the exercise of Amir and Ofer's book w.r.t. conditionally
symmetric martingales} relies on the proof of the known result
in Theorem~\ref{theorem: from the book of Dembo and Zeitouni},
where the latter dates back to Freedman's paper (see
Theorem~1.6 in Freedman (1975), and also
Exercise~2.4.21(b) in Dembo and Zeitouni (1997)). The original proof
of Theorem~\ref{theorem: from the book of Dembo and Zeitouni}
(see Section~3 in Freedman (1975)) is modified in a way that
facilitates realizing how the bound can be improved for conditionally
symmetric martingales with bounded jumps. This improvement is
obtained via the refinement
of Bennett's inequality for conditionally symmetric distributions.

Without any loss of generality, let us assume that $d=1$
(otherwise, $\{X_k\}$ and $z$ are divided by $d$, and $\{Q_k\}$
and $r$ are divided by $d^2$; this normalization extends
the bound to the case of an arbitrary $d>0$).
Let $S_n \triangleq X_n - X_0$ for every $n \in \naturals_0$;
then $\{S_n, \mathcal{F}_n\}_{n \in \naturals_0}$ is a
martingale with $S_0=0$.

\begin{lemma}
Under the assumptions of Theorem~\ref{theorem: improved inequality
for the exercise of Amir and Ofer's book w.r.t. conditionally symmetric
martingales}, let
\begin{equation}
U_n \triangleq \exp(\lambda S_n - \theta Q_n), \quad \forall \, n \in
\{0, 1, \ldots \} \label{eq: U supermartingale}
\end{equation}
where $\lambda \geq 0$ and $\theta \geq \cosh(\lambda)-1 \triangleq \theta_{\min}(\lambda)$ are
arbitrary constants. Then, $\{U_n, \mathcal{F}_n\}_{n \in \naturals_0}$
is a supermartingale. \label{lemma: strengthened result for the supermartingale}
\end{lemma}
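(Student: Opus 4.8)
The plan is to verify the supermartingale property directly, i.e.\ to show that each $U_n$ is integrable and that $\expectation[U_n \,|\, \mathcal{F}_{n-1}] \leq U_{n-1}$ a.s.\ for every $n \in \naturals$. Since $Q_n - Q_{n-1} = \expectation[\xi_n^2 \,|\, \mathcal{F}_{n-1}]$ is $\mathcal{F}_{n-1}$-measurable and $S_n = S_{n-1} + \xi_n$ with $S_{n-1}$ also $\mathcal{F}_{n-1}$-measurable, one can factor
$$U_n = U_{n-1} \, \exp\bigl(-\theta (Q_n - Q_{n-1})\bigr) \, \exp(\lambda \xi_n),$$
pull the first two ($\mathcal{F}_{n-1}$-measurable) factors out of the conditional expectation, and thereby reduce the lemma to the a.s.\ bound
$$\expectation[\exp(\lambda \xi_n) \,|\, \mathcal{F}_{n-1}] \leq \exp\bigl(\theta \, \expectation[\xi_n^2 \,|\, \mathcal{F}_{n-1}]\bigr).$$
Integrability is immediate: the normalization $d=1$ gives $\xi_k \leq 1$, hence (see next paragraph) $|\xi_k|\le 1$, so $|S_n| \leq n$ and $Q_n \geq 0$, whence $0 < U_n \leq e^{\lambda n}$.

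For the displayed conditional bound I would first observe that, although only the one-sided bound $\xi_n \leq d = 1$ is assumed, the conditional symmetry of $\xi_n$ given $\mathcal{F}_{n-1}$ means $-\xi_n$ has the same conditional law, so $\xi_n \geq -1$ a.s.\ as well; thus $|\xi_n| \leq 1$ a.s., and the conditional law of $\xi_n$ given $\mathcal{F}_{n-1}$ satisfies the hypotheses of Corollary~\ref{corollary: refined Bennett's inequality for symmetric distributions} with $d=1$ and $\gamma = \expectation[\xi_n^2 \,|\, \mathcal{F}_{n-1}]$. Applying that corollary (with the parameter $\lambda$ in its statement) gives, a.s.,
$$\expectation[\exp(\lambda \xi_n) \,|\, \mathcal{F}_{n-1}] \leq 1 + \bigl(\cosh(\lambda) - 1\bigr)\, \expectation[\xi_n^2 \,|\, \mathcal{F}_{n-1}].$$

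I would then close the argument with the elementary inequality $1 + u \leq e^{u}$ applied to the nonnegative quantity $u = (\cosh(\lambda)-1)\,\expectation[\xi_n^2 \,|\, \mathcal{F}_{n-1}]$, followed by monotonicity of the exponential and the hypothesis $\theta \geq \cosh(\lambda)-1 = \theta_{\min}(\lambda)$ (using $\expectation[\xi_n^2\,|\,\mathcal{F}_{n-1}]\ge 0$); this yields exactly $\expectation[\exp(\lambda\xi_n)\,|\,\mathcal{F}_{n-1}] \leq \exp(\theta(Q_n-Q_{n-1}))$, and substituting back gives $\expectation[U_n \,|\, \mathcal{F}_{n-1}] \leq U_{n-1}$. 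There is no genuine obstacle here; the only point that deserves care is the upgrade from the one-sided bound $\xi_n \leq d$ to the two-sided bound $|\xi_n| \leq d$ via conditional symmetry, since this is what makes the refined Bennett inequality of Corollary~\ref{corollary: refined Bennett's inequality for symmetric distributions} (equivalently Lemma~\ref{lemma: inequality for symmetric distributions}) applicable to the conditional distribution of the jump — everything else is the standard tower-property computation combined with $1+u\le e^{u}$.
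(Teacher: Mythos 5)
Your proof is correct and takes essentially the same route as the paper: the same factorization of $U_n$ via the tower property, the same application of Corollary~\ref{corollary: refined Bennett's inequality for symmetric distributions} to the conditional law of $\xi_n$, and the same concluding comparison with $\exp\bigl(\theta\,\expectation[\xi_n^2\,|\,\mathcal{F}_{n-1}]\bigr)$ (the paper phrases this as the condition $\frac{1+\alpha(\cosh(\lambda)-1)}{\exp(\theta\alpha)}\le 1$ for all $\alpha\ge 0$ and settles it by calculus, while you settle the needed direction directly via $1+u\le e^{u}$). Your explicit observation that conditional symmetry upgrades the one-sided bound $\xi_n\le d$ to $|\xi_n|\le d$, which is what makes the refined Bennett inequality applicable, is a point the paper leaves implicit and is a sound clarification rather than a different approach.
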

\begin{IEEEproof}
It is easy to verify that $U_n \in L^1(\Omega, \mathcal{F}_n, \pr)$
for $\lambda, \theta \geq 0$ (note that $S_n \leq n$ a.s.). It is required to show that
$\expectation[U_n | \mathcal{F}_{n-1}] \leq U_{n-1}$ holds a.s.
for every $n \in \naturals$, under the above
assumptions on $\lambda$ and $\theta$ in
\eqref{eq: U supermartingale}. We have
\begin{eqnarray}
&& \expectation[U_n | \mathcal{F}_{n-1}] \nonumber\\
&& \stackrel{\text{(a)}}{=} \exp(-\theta Q_n) \, \exp(\lambda
S_{n-1}) \, \expectation\bigl[ \exp(\lambda \xi_n) \, |
\, \mathcal{F}_{n-1}\bigr] \nonumber\\
&& \stackrel{\text{(b)}}{=} \exp(\lambda S_{n-1}) \,
\exp\bigl(-\theta(Q_{n-1}+\expectation[\xi_n^2 |
\mathcal{F}_{n-1}])\bigr) \; \expectation\bigl[\exp(\lambda \xi_n)
\, | \, \mathcal{F}_{n-1}\bigr] \nonumber \\
&& \stackrel{\text{(c)}}{=} U_{n-1} \left(\frac {\expectation\bigl[
\exp(\lambda \xi_n) \, | \, \mathcal{F}_{n-1}\bigr]}{\exp(\theta
\expectation[\xi_n^2 \, | \, \mathcal{F}_{n-1}])} \right)
\label{eq: chain of equalities for the conditional expectation of U}
\end{eqnarray}
where (a) follows from \eqref{eq: U supermartingale} and because $Q_n$ and $S_{n-1}$ are
$\mathcal{F}_{n-1}$-measurable and $S_n = S_{n-1} + \xi_n$, (b) follows from
\eqref{eq: Q}, and (c) follows from \eqref{eq: U supermartingale}.

By assumption $\xi_n = S_n - S_{n-1} \leq 1$ a.s., and
$\xi_n$ is conditionally symmetric
around zero, given $\mathcal{F}_{n-1}$, for every
$n \in \naturals$. By applying Corollary~\ref{corollary:
refined Bennett's inequality for symmetric distributions}
to the conditional expectation of $\exp(\lambda \xi_n)$
given $\mathcal{F}_{n-1}$, it follows from
\eqref{eq: chain of equalities for the conditional expectation of U} that
\begin{eqnarray}
\expectation[U_n | \mathcal{F}_{n-1}] \leq U_{n-1}
\left(\frac{1 + \expectation[\xi_n^2 \, | \, \mathcal{F}_{n-1}] \;
\bigl(\cosh(\lambda)-1\bigr)}{\exp\bigl(\theta \expectation[\xi_n^2 |
\mathcal{F}_{n-1}]\bigr)} \right). \label{eq: inequality for the
conditional expectation of U_n under the conditional symmetry assumption}
\end{eqnarray}
Let $\lambda \geq 0$. In order to ensure that $\{U_n, \mathcal{F}_n\}_{n \in \naturals_0}$ forms
a supermartingale, it is sufficient (based on \eqref{eq: inequality for the
conditional expectation of U_n under the conditional symmetry assumption})
that the following condition holds:
\begin{equation}
\frac{1+\alpha \bigl(\cosh(\lambda)-1\bigr)}{\exp(\theta \alpha)} \leq 1,
\quad \forall \, \alpha \geq 0.
\label{eq: condition for supermartingality under the conditional symmetry assumption}
\end{equation}
Calculus shows that, for $\lambda \geq 0$, the condition
in \eqref{eq: condition for supermartingality under the conditional symmetry assumption}
is satisfied if and only if
\begin{equation}
\theta \geq \cosh(\lambda)-1 \triangleq \theta_{\min}(\lambda).
\label{eq: new minimal theta}
\end{equation}
From \eqref{eq: inequality for the conditional expectation of U_n under
the conditional symmetry assumption},
$\{U_n, \mathcal{F}_n\}_{n \in {\naturals_0}}$
is a supermartingale if $\lambda \geq 0$ and $\theta \geq \theta_{\min}(\lambda)$.
This proves Lemma~\ref{lemma: strengthened result for the supermartingale}.
\end{IEEEproof}

Let $z, r > 0$, $\lambda \geq 0$, and
$\theta \geq \cosh(\lambda) - 1$. In the following, we
rely on Doob's sampling theorem.
To this end, let $M \in \naturals$, and define
two stopping times adapted to $\{\mathcal{F}_n\}$.
The first stopping time is $\alpha = 0$, and the
second stopping time $\beta$ is the
minimal value of $n \in \{0, \ldots, M\}$
(if any) such that $S_n \geq z$ and $Q_n \leq r$
(note that $S_n$ is $\mathcal{F}_n$-measurable
and $Q_n$ is $\mathcal{F}_{n-1}$-measurable, so
the event $\{\beta \leq n\}$
is $\mathcal{F}_n$-measurable); if such a value
of $n$ does not exist, let $\beta \triangleq M$. Hence
$\alpha \leq \beta$ are two bounded stopping times.
From Lemma~\ref{lemma: strengthened result for the supermartingale},
$\{U_n, \mathcal{F}_n\}_{n \in \naturals_0}$
is a supermartingale for the corresponding set of parameters
$\lambda$ and $\theta$, and from Doob's sampling theorem
\begin{equation}
\expectation[U_{\beta}] \leq \expectation[U_0] = 1
\label{eq: Doob's sampling theorem for supermartingales}
\end{equation}
($S_0 = Q_0 = 0$, so from \eqref{eq: U supermartingale}, $U_0 = 1$ a.s.).
Hence, this implies the following chain of inequalities:
\begin{eqnarray}
&& \pr(\exists \, n \leq M: \, S_n \geq z, Q_n \leq r) \nonumber\\
&& \stackrel{\text{(a)}}{=} \pr(S_{\beta} \geq z, Q_{\beta} \leq r)
\nonumber\\
&& \stackrel{\text{(b)}}{\leq} \pr(\lambda S_{\beta} - \theta Q_{\beta}
\geq \lambda z - \theta r)
\nonumber \\
&& \stackrel{\text{(c)}}{\leq} \frac{\expectation[U_{\beta}]}{\exp(\lambda z -
\theta r)} \nonumber \\
&& \stackrel{\text{(d)}}{\leq} \exp\bigl(-(\lambda z - \theta r)\bigr)
\label{eq: chain of inequalities for the tail probability of
martingales}
\end{eqnarray}
where equality~(a) follows from the definition of the stopping time
$\beta \in \{0, \ldots, M\}$, (b) holds since $\lambda, \theta \geq 0$,
(c) follows from Chernoff's bound and the definition
in \eqref{eq: U supermartingale}, and finally~(d) follows from
\eqref{eq: Doob's sampling theorem for supermartingales}. Since
\eqref{eq: chain of inequalities for the tail probability of
martingales} holds for every $M \in \naturals$, from
the continuity theorem for non-decreasing events and \eqref{eq: chain
of inequalities for the tail probability of martingales}
\begin{eqnarray}
&& \pr(\exists \, n \in \naturals: \, S_n \geq z, Q_n \leq r)
\nonumber\\[0.1cm]
&& = \lim_{M \rightarrow \infty} \pr(\exists \, n \leq M:
\, S_n \geq z, Q_n \leq r) \nonumber\\[0.1cm]
&& \leq \exp\bigl(-(\lambda z - \theta r)\bigr).
\label{eq: 2nd chain of inequalities for the tail probability
of martingales}
\end{eqnarray}
The choice of the non-negative parameter $\theta$ as the minimal value for which \eqref{eq:
2nd chain of inequalities for the tail probability of martingales} is
valid provides the tightest bound within this form. Hence, for a fixed $\lambda \geq 0$
and $\theta = \theta_{\min}(\lambda)$, the bound in
\eqref{eq: 2nd chain of inequalities for the tail probability of martingales}
gives the inequality
\begin{eqnarray*}
\pr(\exists \, n \in \naturals: \, S_n \geq z, Q_n \leq r)
\leq \exp\Bigl(-\bigl[\lambda z - r \, \theta_{\min}(\lambda)
\bigr]\Bigr), \quad \forall \, \lambda \geq 0.
\end{eqnarray*}
The optimized $\lambda$ is equal to $\lambda = \sinh^{-1} \Bigl(\frac{z}{r}\Bigr).$
Its substitution in \eqref{eq: new minimal theta} gives that
$\theta_{\min}(\lambda) = \sqrt{1+\frac{z^2}{r^2}}-1$,
and
\begin{equation}
\pr(\exists \, n \in \naturals: \, S_n \geq z, Q_n \leq r)
\leq \exp\left(-\frac{z^2}{2r} \cdot C\Bigl(\frac{z}{r}\Bigr) \right)
\label{eq: tightened inequality for d = 1 under conditional symmetry}
\end{equation}
with $C$ in \eqref{eq: C}.
Finally, the proof of Theorem~\ref{theorem:
improved inequality for the exercise of Amir and Ofer's book
w.r.t. conditionally symmetric martingales}
is completed by showing that the following equality holds:
\begin{eqnarray}
&& \hspace*{-1cm} A \triangleq \{\exists \, n \in \naturals:
\, S_n \geq z, Q_n \leq r\} \nonumber \\
&& \hspace*{-0.6cm}  = \{\exists \, n \in \naturals:
\, \max_{1 \leq k \leq n} S_k \geq z, Q_n \leq r\} \triangleq B.
\label{eq: equivalent events}
\end{eqnarray}
Clearly $A \subseteq B$. To show that $B \subseteq A$,
assume that event $B$ is satisfied. Then, there exists some
$n \in \naturals$ and $k \in \{1, \ldots, n\}$ such that $S_k \geq z$
and $Q_n \leq r$. Since the predictable quadratic variation process
$\{Q_n\}_{n \in \naturals_0}$ in \eqref{eq: Q}
is monotonic non-decreasing, this implies that
$S_k \geq z$ and $Q_k \leq r$; therefore, event $A$ is also satisfied
and $B \subseteq A$. The combination of
\eqref{eq: tightened inequality for d = 1 under conditional symmetry}
and \eqref{eq: equivalent events} completes the proof of
Theorem~\ref{theorem: improved inequality for the exercise of Amir
and Ofer's book w.r.t. conditionally symmetric martingales}.

\subsection{Proof of Corollary~\ref{corollary: concentration
inequality for conditionally symmetric sub and supermartingales}}

The proof of Corollary~\ref{corollary: concentration
inequality for conditionally symmetric sub and supermartingales}
is similar to the proof of
Theorem~\ref{theorem: a concentration inequality for a symmetric
conditional distribution}.
The only difference is that for a supermartingale,
$X_k - X_0 = \sum_{j=1}^k (X_j-X_{j-1}) \leq \sum_{j=1}^k
\eta_j$ a.s., where $\eta_j \triangleq X_j-\expectation[X_j \, | \,
\mathcal{F}_{j-1}]$ is $\mathcal{F}_j$-measurable. Hence
$\pr\Bigl(\max_{1 \leq k \leq n} X_k-X_0 \geq \alpha n\Bigr) \leq
\pr\biggl(\max_{1 \leq k \leq n} \sum_{j=1}^k \eta_j \geq
\alpha n\biggr)$ where a.s. $\eta_j \leq d$, $\expectation[\eta_j \,
| \, \mathcal{F}_{j-1}]=0$, and $\text{Var}(\eta_j \, | \,
\mathcal{F}_{j-1}) \leq \sigma^2$. The continuation
coincides with the proof of Theorem~\ref{theorem: a concentration
inequality for a symmetric conditional distribution} for the
martingale $\{\sum_{j=1}^k \eta_j, \mathcal{F}_k\}_{k \in \naturals_0}$
(starting from \eqref{eq: maximal inequality for submartingales} and
\eqref{eq: inequality for the expected
value of the sum under a symmetry assumption}).
The passage to submartingales is trivial.

\subsection{Proof of Corollary~\ref{corollary: adaptation of
Theorems 6 and 7 to supermartingales}}

Consider the martingale $\{Y_n, \mathcal{F}_n\}_{n \in \naturals_0}$
where $Y_n \triangleq \sum_{j=1}^n \eta_j$
with $\eta_j \triangleq X_j-\expectation[X_j \, | \,
\mathcal{F}_{j-1}]$, and $Y_0 \triangleq 0$.  Since
$Y_k - Y_{k-1} = \eta_k$ for every $k \in \naturals$,
the predictable quadratic variation process $\{Q_n\}_{n \in \naturals_0}$
which corresponds to the martingale $\{Y_n, \mathcal{F}_n\}_{n \in \naturals_0}$
is, from \eqref{eq: Q}, the same process as the
one which corresponds to the supermartingale
$\{X_n, \mathcal{F}_n\}_{n \in \naturals_0}$. Furthermore,
$X_k - X_0 \leq \sum_{j=1}^n \xi_j = Y_k - Y_0$
for every $k \in \naturals$.
Hence, for every $z, r > 0$,
\begin{eqnarray*}
&& \pr\Bigl( \exists \, n \in \naturals: \,
\max_{1 \leq k \leq n} (X_k - X_0)
\geq z, \, Q_n \leq r \Bigr)
\leq \pr\Bigl( \exists \, n \in \naturals: \,
\max_{1 \leq k \leq n} (Y_k - Y_0)
\geq z, \, Q_n \leq r \Bigr).
\end{eqnarray*}
If $\{X_n, \mathcal{F}_n\}_{n \in \naturals_0}$ is a conditionally symmetric
supermartingale, then $\{Y_n, \mathcal{F}_n\}_{n \in \naturals_0}$ is
a conditionally symmetric martingale. Then, applying
Theorem~\ref{theorem: improved inequality for the exercise of Amir
and Ofer's book w.r.t. conditionally symmetric martingales}
to the martingale $\{Y_n, \mathcal{F}_n\}_{n \in \naturals_0}$
gives the improved bound in \eqref{eq: improved inequality for the
exercise of Amir and Ofer's book w.r.t. conditionally symmetric martingales}
and \eqref{eq: C}.

\section*{Acknowledgment}
The anonymous reviewer and the Associate Editor are acknowledged for their 
detailed comments that improved the presentation of this paper. Suggestions 
made by Ofer Zeitouni were also helpful in improving the presentation of the results.

\section*{References}
{\footnotesize \hspace*{-0.45cm}
Alon, N., Spencer, J.H., 2008. The Probabilistic Method, third ed. Wiley
Series in Discrete Mathematics and Optimization.\\
Azuma, K., 1967. Weighted sums of certain dependent random variables.
The Tohoku Mathematical Journal~19, 357--367.\\
Bennett, G., 1962. Probability inequalities for the sum of independent
random variables. Journal of the American Statistical Association~57, 33--45.\\[0.1cm]
Burkholder, D.L., 1991. Explorations in martingale theory and its
applications. Ecole d'Et\'{e} de Probabilit\'{e}s de Saint-Flour~XIX--1989,
Lecture Notes in Mathematics, vol.~1464. Springer-Verlag, pp.~1--66.\\
Chung, F., Lu, L., 2006. Concentration inequalities and martingale
inequalities: a survey. Internet Mathematics~3, 79--127.\\
De la Pe\~{n}a, V.H., 1999. A general class of exponential inequalities for
martingales and ratios. Annals of Probability~27, 537--564.\\
De la Pe\~{n}a, V.H., Klass, M.J., Lai, T.L., 2004. Self-normalized processes:
exponential inequalities, moment bounds and iterated logarithm laws.
Annals of Probability~32, 1902--1933.\\
Dembo, A., Zeitouni, O., 1997. Large Deviations Techniques and
Applications, second ed. Springer.\\
Denuit, M., Dhaene, J., Goovaerts, M.J., Kaas, R., 2005. Actuarial
Theory for Dependent Risks, John Wiley \& Sons Inc., NY.\\
Dzhaparide, K., van Zanten, J.H., 2001. On Bernstein-type inequalities
for martingales. Stochastic Processes and their Applications~93, 109--117.\\
Freedman, D., 1975. On tail probabilities for martingales.
Annals of Probability 3, 100--118.\\
Grama, I., Haeusler, E., 2000. Large deviations for martingales via
Cramer's method. Stochastic Processes and their Applications~85, 279--293.\\
Hoeffding, W., 1963. Probability inequalities for sums of bounded
random variables. Journal of the American Statistical
Association~58, 13--30.\\
McDiarmid, C., 1989. On the method of bounded differences.
In: Surveys in Combinatorics, vol.~141. Cambridge
University Press, Cambridge, pp.~148--188.\\
McDiarmid, C., 1998. Concentration. In: Probabilistic Methods for
Algorithmic Discrete Mathematics. Springer, pp.~195--248.\\
Os\c{e}kowski, A., 2010a.
Weak type inequalities for conditionally symmetric martingales.
Statistics and Probability Letters~80, 2009--2013. \\
Os\c{e}kowski, A., 2010b. Sharp ratio inequalities for a conditionally symmetric
martingale. Bulletin of the Polish Academy of Sciences Mathematics~58,
65--77. \\
Pinelis, I., 1994. Optimum bounds for the distributions of martingales
in Banach spaces. Annals of Probability~22, 1679--1706.\\
Steiger, W.L., 1969. A best possible Kolmogoroff-type inequality for
martingales and a characteristic property. Annals of
Mathematical Statistics~40, 764--769.\\
Wang, G., 1991. Sharp maximal inequalities for conditionally symmetric
martingales and Brownian motion. Proceedings of the American
Mathematical Society~112, 579--586.
}
\end{document}